\newcommand\blfootnote[1]{%
  \begingroup
  \renewcommand\thefootnote{}\footnote{#1}%
  \addtocounter{footnote}{-1}%
  \endgroup
}
\makeatletter \@namedef{subjclassname@2010}{%
  \textup{2010} Mathematics Subject Classification}
\newtheorem{Theorem}{Theorem}[section]
\newtheorem{Lemma}[Theorem]{Lemma}
\newtheorem{Corollary}[Theorem]{Corollary}
\newtheorem*{Remark}{Remark} 
\newtheorem{Conjecture}{Conjecture}
\newtheorem{Claim}[Theorem]{Claim}
\newcommand{\CC}[0]{\mathbb C}	
	\newcommand{\QQ}[0]{\mathbb Q}
	\newcommand{\RR}[0]{\mathbb R}
	\newcommand{\ZZ}[0]{\mathbb Z}
	\newcommand{\cP}[0]{\mathcal P}
\newcommand{\cJ}[0]{\mathcal J}
\newcommand{\dd}[0]{\textbf{\textit{d}}}
\newcommand{\jj}[0]{\textbf{\textit{j}}}
\renewcommand{\bar}[1]{\overline{#1}}
\newcommand{\eps}[0]{\varepsilon}
\newcommand{\lr}[1]{\left(#1\right)}
\newcommand\be{\begin{eqnarray*}}
\newcommand\ee{\end{eqnarray*}}
\newcommand\beq{\begin{equation}}
\newcommand\eeq{\end{equation}}
\newcommand\ben{\begin{eqnarray}}
\newcommand\een{\end{eqnarray}}
\begin{document}


\baselineskip=17pt



\title{On iterated product sets with shifts II}

\author[B. Hanson]{Brandon Hanson} \address{University of Georgia\\
Athens, GA, USA}
\email{brandon.w.hanson@gmail.com}
\author[O. Roche-Newton]{Oliver Roche-Newton} \address{Johann Radon Institute for Computational and Applied Mathematics\\
Linz, Austria}
\email{o.rochenewton@gmail.com}
\author[D. Zhelezov]{Dmitrii Zhelezov} \address{Alfr\'{e}d R\'{e}nyi Institute of Mathematics \\ 
Hungarian Academy of Sciences, Budapest, Hungary }
\email{dzhelezov@gmail.com}
\date{}

\begin{abstract} The main result of this paper is the following: for all $b \in \mathbb Z$ there exists $k=k(b)$ such that
\[ \max \{ |A^{(k)}|, |(A+u)^{(k)}|  \}  \geq |A|^b, \]
for any finite $A \subset \mathbb Q$ and any non-zero $u \in \mathbb Q$. Here, $|A^{(k)}|$ denotes the $k$-fold product set $\{a_1\cdots a_k : a_1, \dots, a_k \in A \}$.

Furthermore, our method of proof also gives the following $l_{\infty}$ sum-product estimate. For all $\gamma >0$ there exists a constant $C=C(\gamma)$ such that for any $A \subset \mathbb Q$ with $|AA| \leq K|A|$ and any $c_1,c_2 \in \mathbb Q \setminus \{0\}$, there are at most $K^C|A|^{\gamma}$ solutions to
\[ c_1x + c_2y =1 ,\,\,\,\,\,\,\, (x,y) \in A \times A.
\]
In particular, this result gives a strong bound when $K=|A|^{\epsilon}$, provided that $\epsilon >0$ is sufficiently small, and thus improves on previous bounds obtained via the Subspace Theorem.

In further applications we give a partial structure theorem for point sets which determine many incidences and prove that sum sets grow arbitrarily large by taking sufficiently many products.

We utilise a query-complexity analogue of the polynomial Freiman-Ruzsa conjecture, due to P\"{a}lv\"{o}lgyi and Zhelezov \cite{PZ}. This new tool replaces the role of the complicated setup of Bourgain and Chang \cite{BC}, which we had previously used. Furthermore, there is a better quantitative dependence between the parameters.


\end{abstract}

\maketitle

\blfootnote{Mathematics Subject Classification (2010) - 11B30}
\blfootnote{Keywords: sum-product estimates, Weak Erd\H{o}s-Szemer\'{e}di Conjecture, Subspace Theorem, product sets with shifts, unbounded growth.}
\blfootnote{Declaration of interests: none}

\section{Introduction}

\subsection{Background and statement of main results}

Let $A$ be a finite set of rational numbers and let $u\in \QQ$ be non-zero. In this article we wish to investigate the sizes of the $k$-fold product sets
\[A^{(k)}:=\{a_1\cdots a_k:a_1,\ldots,a_k\in A\}\]
and
\[(A+u)^{(k)} =\{(a_1+u)\cdots (a_k+u):a_1,\ldots,a_k\in A\}.\]
This is an instance of a sum-product problem. Recall that the Erd\H{o}s-Szemer\'{e}di \cite{ES} sum-product conjecture states that, for all $\epsilon >0$ there exists a constant $c(\epsilon)>0$ such that
\[ \max \{|A+A|, |AA| \} \geq c(\eps) |A|^{2-\eps} \]
holds for any $A \subset \mathbb Z$. Here $A+A:=\{a+b : a,b \in A \}$ is the \textit{sum set} of $A$, and $AA$ is another notation for $A^{(2)}$. Erd\H{o}s and Szemer\'{e}di also made the more general conjecture that for any finite $A \subset \mathbb Z$,
\[
\max \{|kA|,|A^k|\} \geq c(\epsilon)|A|^{k-\epsilon},
\]
where $kA:= \{a_1+ \dots +a_k : a_1,\dots, a_k \in A\}$ is the \textit{$k$-fold sum set}. Both of these conjectures are wide open, and it is natural to also consider them for the case when $A$ is a subset of $\mathbb R$ or indeed other fields. The case when $k=2$ has attracted the most interest. See, for example, \cite{KS}, \cite{KS2}, \cite{S}, \cite{TV} and the references contained therein for more background on the original Erd\H{o}s-Szemer\'{e}di sum-product problem.

Most relevant to our problem is the case of general (large) $k$. Little is known about the Erd\H{o}s-Szemer\'{e}di conjecture in this setting, with the exception of the remarkable series of work of Chang \cite{C} and Bourgain-Chang \cite{BC}. This culminated in the main theorem of \cite{BC}: for all $b \in \mathbb R$ there exists $ k = k(b) \in \mathbb Z$ such that
\begin{equation} \label{BCmain}
\max \{|kA|,|A^k|\} \geq |A|^b
\end{equation}
holds for any $A \subset \mathbb Q$. On the other hand, it appears that we are not close to proving such a strong result for $A \subset \mathbb R$.

In the same spirit as the Erd\H{o}s-Szemer\'{e}di conjecture, it is expected that an additive shift will destroy  multiplicative structure present in $A$. In particular, one expects that, for a non-zero $u$, at least one of $|A^{(k)}|$ or $|(A+u)^{(k)}|$ is large. The $k=2$ version of this problem was considered in \cite{GS} and \cite{JRN}. The main result of this paper is the following analogue of the Bourgain-Chang Theorem.
\begin{Theorem} \label{thm:mainmain}
For all $b \in \mathbb Z$, there exists $k=k(b)$ such that for any finite set $A \subset \mathbb Q$ and any non-zero rational $u$,
\[  \max \{ |A^k|, |(A+u)^k| \} \geq |A|^b . \]
\end{Theorem}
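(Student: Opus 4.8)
The natural strategy is to bootstrap the Bourgain--Chang theorem \eqref{BCmain} via the following heuristic: if $A$ has small $k$-fold product set, then $A$ is ``multiplicatively structured'' in a strong quantitative sense, which should be incompatible with $A+u$ also having small $k$-fold product set. More precisely, I would first prove the $l_\infty$ sum-product estimate advertised in the abstract, and then feed it into an inductive/iterative scheme. So the plan has two phases.

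\emph{Phase 1: the $l_\infty$ estimate.} Suppose $|A^{(k)}|$ is small, say $|A^{(k)}| \le |A|^{C_k}$ for a slowly growing $C_k$. One wants to deduce that $|AA| \le K|A|$ for some controlled $K$ (or reduce to a large subset $A'\subseteq A$ with this property, using a Pl\"unnecke--Ruzsa type covering argument, noting that $|A^{(k)}|$ small forces $|AA|$ small on average). Given $|AA|\le K|A|$, the goal is to bound the number of solutions to $c_1x+c_2y=1$ with $(x,y)\in A\times A$ by $K^C|A|^\gamma$. The mechanism here should be: a solution $c_1x+c_2y=1$ means that the point $(x,y)$ lies on a fixed line; since $A\times A$ is a multiplicatively structured grid (a subset of a geometric-progression-like set of small doubling), the number of such points is governed by how a line can meet a multiplicative Freiman-isomorphic copy of a low-dimensional progression. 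Concretely, by Freiman's theorem in the multiplicative setting, $A$ is (up to refining) contained in a generalized geometric progression of bounded rank $r=r(K)$ and size $\le K^{O(1)}|A|$; then $c_1x+c_2y=1$ becomes a unit equation in a finitely generated subgroup of $\QQ^\times$ of rank $\approx r$, and one invokes a quantitative bound on solutions to $S$-unit equations — but crucially a \emph{polynomial}-in-rank bound rather than the exponential bounds coming from the Subspace Theorem. This is where the rationality of $A$ is essential: over $\QQ$ one has the strong height/gcd machinery (in the spirit of Bourgain--Chang and of Konyagin--Shkredov), which is what lets one beat the Subspace Theorem bounds. The main obstacle in Phase 1 is precisely obtaining this polynomial-in-$\log K$ (equivalently $K^{O(1)}$) dependence; the Subspace Theorem alone gives something like $\exp(r)$ which is far too weak, so one must use the $\QQ$-specific structure — factorization into primes, the fact that elements of bounded multiplicative doubling have few prime factors on average — to run an efficient induction on the number of primes involved.

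\emph{Phase 2: from the $l_\infty$ estimate to Theorem~\ref{thm:mainmain}.} Assume for contradiction that both $|A^{(k)}|<|A|^b$ and $|(A+u)^{(k)}|<|A|^b$. By a dyadic pigeonholing / tensor-power trick (replacing $A$ by $A^{(m)}$ for suitable $m$, or by a large subset), the smallness of $|A^{(k)}|$ should promote to: $A$ has multiplicative doubling $|AA|\le K|A|$ with $K=|A|^{o(1)}$, and similarly $A+u$ has small multiplicative doubling. Now apply the $l_\infty$ estimate: the equation
\[
\frac{1}{u}\,x \;-\; \frac{1}{u}\,y \;=\; 1, \qquad x \in A+u,\ y \in A,
\]
has $|A|$ solutions (take $x=a+u$, $y=a$ for each $a\in A$). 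But $A+u$ has small multiplicative doubling and $A$ has small multiplicative doubling, so a bilinear/bipartite version of the $l_\infty$ estimate (for two sets each of small doubling, with the linear form having rational coefficients) bounds the number of solutions by $K^C|A|^\gamma \ll |A|^{1/2}$, a contradiction for $\gamma$ small. Thus one cannot have both product sets small, and tracking the quantitative dependence of $k$ on $b$ (the $l_\infty$ estimate is uniform, so one fixed $k$ works for each $b$) yields the theorem. The secondary obstacle here is the bookkeeping needed to pass from ``$k$-fold product set is small'' to ``doubling constant is $|A|^{o(1)}$'' uniformly — this requires the Bourgain--Chang-type iteration to be quantitatively robust, but it is essentially a Pl\"unnecke/Ruzsa-calculus exercise once Phase~1 is in hand.
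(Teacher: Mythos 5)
Your proposal has a genuine gap, and it sits at the heart of Phase 2. From the assumption $|A^{(k)}|<|A|^b$ you cannot promote to ``$|AA|\leq K|A|$ with $K=|A|^{o(1)}$'' for $A$ itself (nor for a large subset): take $A=\{2^x:x\in X\}$ with $X$ a random subset of $[N^{3/2}]$ of size $N$; then $|A^{(k)}|\leq k N^{3/2}$ for every fixed $k$, yet $|AA|\gg |A|^{3/2}$ and no large subset has doubling $|A|^{o(1)}$. The telescoping/pigeonhole trick only yields small multiplicative doubling for some intermediate product set $B=A^{(k_0)}$, and at that level the linear relation you want to exploit disappears: the $|A|$ solutions of $x-y=u$ live on $(A+u)\times A$, and there is no linear equation with many solutions relating $(A+u)^{(k_1)}$ to $A^{(k_0)}$, so your ``bilinear $l_\infty$ estimate'' has nothing to bite on. The paper bridges exactly this gap not with the $l_\infty$ estimate but with the weighted mixed energy $\tilde E_{k,w}(A;u)$ (solutions of the simultaneous equations $\prod a_i=\prod b_i$, $\prod(a_i+u)=\prod(b_i+u)$) and its $\Lambda$-constant: since $A\subseteq \lambda B$ for a suitable dilate, the stability property $\Lambda_h(A;u)\leq \Lambda_h(\lambda B;u)$ (Corollary \ref{corr:stability}) transfers the strong energy bound from $B$ (which does have small doubling) down to $A$, and then the Cauchy--Schwarz inequality (Lemma \ref{lem:CSbasic}) converts the small mixed energy of $A$ directly into growth of $\max\{|A^{(h)}|,|(A+u)^{(h)}|\}$. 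This subset-stability is precisely why the weights are introduced; your proposal has no substitute for it. Note also that the logical order in the paper is the reverse of yours: Theorem \ref{thm:BS_almost_subgroups} is a corollary of the same $\Lambda$-constant bound (Theorem \ref{thm:lambda}), not an ingredient in the proof of Theorem \ref{thm:mainmain}.

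Phase 1 is also not a proof as it stands. The mechanism you describe --- multiplicative Freiman rank $r(K)$ plus a ``polynomial-in-rank'' bound for unit equations over $\QQ$ --- is essentially Conjecture \ref{conj:Ksubspace} of the paper, which is open; the Subspace Theorem route gives only exponential dependence, as you note. The paper's actual proof of the $l_\infty$ estimate again runs through the shifted mixed energy: the set $S$ of first coordinates of solutions satisfies $S\subseteq A$ and $S-1/c_1\subseteq(-c_2/c_1)A$, so both $|S^{(k)}|$ and $|(S-1/c_1)^{(k)}|$ are small by Pl\"unnecke--Ruzsa, while $\tilde E_k(S;-1/c_1)$ is small by Theorem \ref{thm:lambda} and stability; Lemma \ref{lem:CSbasic} then forces $|S|\leq K^{C}|A|^{\gamma'+2/k}$. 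The ``efficient induction on the number of primes'' you gesture at is the entire technical content of the paper (the separating-constant Chang bound of Section 2 amplified by the Bourgain--Chang induction on scales of Sections 5--8, culminating in Theorem \ref{thm:goodsubset}), and it is needed in the shifted, two-variable form $a^{it_1}(a+u)^{it_2}$, not in the unshifted unit-equation form you propose.
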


This paper is a sequel to \cite{HRNZ}, in which the main result was the following.
\begin{Theorem} \label{thm:usold} For any finite set $A \subset \mathbb Q$ with $|AA| \leq K|A|$, any non-zero $u \in \mathbb Q$ and any positive integer $k$,
$$| (A+u) ^{(k)}| \geq \frac{|A|^k}{(8k^4)^{kK}}. $$
\end{Theorem}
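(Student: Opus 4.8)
The plan is to recast the desired lower bound as an upper bound on a collision count and then to estimate that count prime by prime, using the multiplicative structure of $A$. Put $S=A+u$, so that $|S|=|A|$, and let
\[
E_k=\#\Bigl\{(a_1,\dots,a_k,b_1,\dots,b_k)\in A^{2k}:\ \prod_{i=1}^{k}(a_i+u)=\prod_{i=1}^{k}(b_i+u)\Bigr\}.
\]
If $m(t)$ denotes the number of tuples in $A^k$ whose shifted product equals $t$, then $\sum_t m(t)=|A|^k$ while $E_k=\sum_t m(t)^2$, so by Cauchy--Schwarz $|S^{(k)}|\cdot E_k\ge|A|^{2k}$. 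Hence it suffices to prove $E_k\le(8k^4)^{kK}|A|^k$. Two harmless reductions: dividing $A$ by some fixed $a_0\in A$ we may assume $1\in A$ (this only replaces $u$ by $u/a_0$ and rescales both sides of every collision equally); and, since a collision forces $\prod_i(1+u/a_i)\prod_i a_i=\prod_i(1+u/b_i)\prod_i b_i$, a Ruzsa covering argument together with the Pl\"{u}nnecke--Ruzsa bound $|A^{(k)}|\le K^k|A|$ allows one, if convenient, to normalise $u=-1$ by passing to $1+u/A$, a set with the same hypothesis.

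To bound $E_k$, embed $\mathbb Q^\times/\{\pm1\}$ in the free abelian group $\bigoplus_p\mathbb Z$ by $x\mapsto(v_p(x))_p$. The hypothesis $|AA|\le K|A|$ becomes the statement that the image of $A$ has doubling at most $K$, so by Freiman's theorem $A$ lies in a multiplicative generalised progression whose rank $r$ is bounded in terms of $K$ alone (one may take $r=O(K)$), and all iterated product and ratio sets of $A$ have size $K^{O(1)}|A|$ by Pl\"{u}nnecke--Ruzsa. A collision contributing to $E_k$ is precisely the simultaneous system $\sum_i v_p(a_i+u)=\sum_i v_p(b_i+u)$ over all primes $p$. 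The strategy is to induct on the rank, processing one generator $q$ of the progression at a time: partition $A$ according to the $q$-coordinate of $v(\cdot)$; pass to the $q$-free quotient, which has one fewer generator and still small doubling, and apply the inductive hypothesis there; and multiply back an estimate for the number of ways the fixed shift can realise a prescribed collision of the valuations inside a single direction. Each single-direction estimate should cost a factor polynomial in $k$ per factor of the product --- this is where $8k^4$ enters --- and, iterating over the $\sim K$ directions and the $k$ product factors, one lands on $(8k^4)^{kK}|A|^k$.

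The heart of the matter, and the only genuinely non-routine ingredient, is the single-direction estimate: after stripping away the other directions one is reduced, in essence, to a set of the shape $\{g_0q^j:j\in J\}$ with $q$ a fixed rational and $J$ an interval of integers, and one must show $\bigl|(\{g_0q^j:j\in J\}+u)^{(k)}\bigr|\ge|J|^k/\mathrm{poly}(k)$. The obstruction is that the shift does not respect the multiplicative coordinatisation --- $v_p(a+u)$ is not a function of $v_p(a)$ --- so removing the $q$-adic information of $a$ does not remove that of $a+u$, and the shift must be carried through the reduction by hand. A further subtlety: a single valuation is not enough, because for $p\mid q$ the function $j\mapsto v_p(g_0q^j+u)$ is only \emph{eventually} affine in $j$, which on its own would permit as many as $|J|^{k-1}$ collisions; the fix is to play this off against the Archimedean behaviour, where $\log|g_0q^j+u|$ is affine in $j$ up to an exponentially small, injective-in-$j$ correction, and to conclude that the exponent tuples are determined up to $\mathrm{poly}(k)$ ambiguity. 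This is in spirit an $S$-unit equation estimate, but it must be made fully explicit and cannot be obtained simply by quoting Evertse's bound, since the set of relevant primes is unbounded; it is precisely the fact that $A$ sits densely inside a bounded-rank progression that makes the polynomial-in-$k$, exponential-in-$K$ bound attainable, as opposed to the weaker estimates that the Subspace Theorem would give.
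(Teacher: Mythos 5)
Your opening is exactly the paper's: you pass to the $k$-fold multiplicative energy $E_k$ of $A+u$, apply Cauchy--Schwarz, and use Freiman's Lemma to reduce to at most $K$ prime directions (the affine-dimension form suffices here; invoking Freiman's theorem and generalised progressions is unnecessary and would cost more than $O(K)$ in rank). But the proof stops where the theorem starts: the ``single-direction estimate'', which you yourself flag as the only non-routine ingredient, is never proved, and the mechanism you sketch for it does not work as stated. Your induction ``passes to the $q$-free quotient'' and ``multiplies back'' a within-direction collision count, yet -- as you concede -- the collision equation $\prod_i(a_i+u)=\prod_i(b_i+u)$ does not factor through that quotient, because $a\mapsto a+u$ is not compatible with the multiplicative coordinates; how the shift is ``carried through the reduction by hand'' is precisely the content that is missing. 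Within one direction, the Archimedean observation that $\log|g_0q^j+u|$ is affine in $j$ up to small corrections gives (at best, and only when the corrections are small compared with $\log|q|$, which fails for $|q|$ near $1$ or when some $g_0q^j+u$ is near $0$) the single linear relation $\sum_i j_i=\sum_i j_i'$; it does not pin down the exponent tuple up to $\mathrm{poly}(k)$ ambiguity, so the feared $|J|^{k-1}$ collisions are not excluded. A smaller point: the remark that one may pass to $1+u/A$, ``a set with the same hypothesis'', is unjustified -- $1+u/A$ is an additive shift of a set with small product set, and controlling its multiplicative structure is exactly what the theorem asserts; the harmless normalisation is simply $A\mapsto A/u$, $u\mapsto 1$.

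For comparison, the paper's proof (in the prequel, and reproduced in Section \ref{sec:energy} here via Lemma \ref{thm:basecase}, Lemma \ref{thm:chang1} and Theorem \ref{thm:chang2}) never tries to make the shift respect the coordinates. For one prime $p$ it splits $A$ by $v_p$, divides the shifted equation by $u^k$, and expands $\prod_i(1+a_iu^{-1})=\prod_i(1+b_iu^{-1})$ on the part where $v_p(a)\geq v_p(u)$ (and $\prod_i(1+ua_i^{-1})$ on the complementary part): in any solution the minimal valuation among the $2k$ variables cannot be attained uniquely, else the two sides are divisible by different powers of $p$. This forces one of at most $2\binom{2k}{2}$ coincidence patterns among the valuation classes, and a H\"older argument turns it into the recursion $\tilde E^{1/k}(A)\leq 2\binom{2k}{2}\sum_d\tilde E^{1/k}(A_d)$; iterating over the at most $K$ primes supplied by Freiman's Lemma ends in singleton cells (the multiplicative dimension is at most $K$), giving $E_k\leq (Ck^2)^{kK}|A|^k$ and hence the theorem. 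This $p$-adic ``no unique minimum'' device is the ingredient your per-direction step would need, and it is what your Archimedean/$S$-unit heuristic does not supply.
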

The proof of this result was based on an argument that Chang \cite{C} introduced to give similar bounds for the $k$-fold sum set of a set with small product set. Theorem \ref{thm:usold} is essentially optimal when $K$ is of the order $c\log|A|$, for a sufficiently small constant $c=c(k)$. However, the result becomes trivial when $K$ is larger, for example if $K=|A|^{\epsilon}$ and $\eps>0$. The bulk of this paper is devoted to proving the following theorem, which gives a near optimal bound for the size of $(A+u)^{(k)}$ when $K=|A|^{\eps}$,
for a sufficiently small but positive $\eps$.
\begin{Theorem} \label{thm:main1}
Given $0<\gamma < 1/2$, there exists a positive constant $C=C(\gamma, k)$ such that for any finite $A \subset \mathbb Q$ with $|AA|=K|A|$ and any non-zero rational $u$,
\[ | (A+u) ^{(k)}|  \geq \frac{|A|^{k(1-\gamma)-1}}{K^{Ck}}.\]
\end{Theorem}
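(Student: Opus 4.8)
The plan is to run a multiplicative-energy/iteration argument in the spirit of Bourgain--Chang, but keeping careful track of the dependence on $K$. Write $|AA| = K|A|$, so by the Plünnecke--Ruzsa inequality $|A^{(j)}| \leq K^{j-1}|A|$ for every $j \geq 1$; in particular, all iterated product sets $A^{(j)}$ are still approximately multiplicatively structured with ratio a fixed power of $K$. The key observation is that $(A+u)^{(k)}$ being small forces strong additive--multiplicative coincidences: if $N := |(A+u)^{(k)}|$ is small, then the map $(a_1,\dots,a_k) \mapsto \prod_i (a_i+u)$ has large fibres, i.e. there are $\gtrsim |A|^k / N$ solutions to $\prod_{i=1}^k (a_i + u) = \prod_{i=1}^k (a_i' + u)$. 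I would combine this with the fact that $A$ has small product set to derive a contradiction unless $N$ is as large as claimed.

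The main engine should be an $\ell_\infty$-type estimate for the number of representations, proved by induction on $k$. First I would establish the base case via a Szemerédi--Trotter / incidence bound (over $\mathbb{Q}$, after clearing denominators one can work over $\mathbb{Z}$ and then reduce mod a well-chosen prime, exactly the device used in \cite{BC} to import the Szemerédi--Trotter theorem into this setting): the number of $(x,y) \in A \times A$ with $c_1 x + c_2 y = 1$ is $\lesssim K^{O(1)} |A|^{o(1)}$, which is essentially Theorem 1.3 from the abstract. For the inductive step, write $\prod_{i=1}^k(a_i+u) = t$ and peel off one factor: $a_k + u = t / \prod_{i<k}(a_i+u)$, so that fixing $t$ and the value $s = \prod_{i<k}(a_i+u) \in (A+u)^{(k-1)}$ reduces to counting $a_k \in A$ with $a_k = t/s - u$, combined with the inductive count for the $(k-1)$-fold product. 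One then sums over $s$ using Cauchy--Schwarz and the small-doubling hypothesis to control the multiplicative convolution. Tracking exponents through $k$ iterations, each step costs a factor $K^{O(1)}$ and an $|A|^{O(\gamma)}$ loss, yielding $|(A+u)^{(k)}| \gtrsim |A|^{k(1-\gamma)-1}/K^{Ck}$ after optimizing $\gamma$.

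The step I expect to be the main obstacle is the inductive passage: one needs the product set $A^{(j-1)}$ (and the shifted version $(A+u)^{(j-1)}$) to behave like a set with small doubling throughout the iteration, but the shift $+u$ genuinely interacts badly with products, so there is no clean multiplicative-group structure to exploit. The resolution, following the philosophy of \cite{HRNZ} and of Chang's original argument, is to note that although $(A+u)+u$ has no structure, the \emph{differences} $(a_i + u) - (a_j + u) = a_i - a_j$ remain inside $A - A$, which has small doubling relative to $A$ when $AA$ is small (via Elekes-type or Solymosi-type bounds, again routed through Szemerédi--Trotter over a prime field). So the real content is to set up the incidence configuration — points of the form $(\prod_{i \in S}(a_i+u), \text{something})$ against a family of lines indexed by the remaining coordinates — so that the incidence bound applies with parameters polynomial in $K$. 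Once that configuration is correctly arranged, the rest is a bookkeeping induction; the delicate point is ensuring the prime $p$ used for the reduction can be chosen larger than all the integers appearing after clearing denominators in the $k$-fold products, which forces $p$ to depend on $|A|$ and on $k$ but not in a way that damages the final bound.
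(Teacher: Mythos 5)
Your proposal takes a genuinely different route from the paper, and there is a serious gap at the foundation.

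The paper's actual proof of this theorem is very short once the machinery is in place: define the constant weights $w_a = |A|^{-1/2}$, note that $\tilde E_{k,w}(A;u) = \tilde E_k(A;u)/|A|^k$, apply the Cauchy--Schwarz bound (Lemma~\ref{lem:CSbasic}) to relate $\tilde E_k(A;u)$ to $|A^{(k)}||(A+u)^{(k)}|$, then invoke the $\Lambda$-constant bound $\Lambda_k(A;u)\le K^C|A|^\gamma$ (Theorem~\ref{thm:lambda}) and Pl\"unnecke--Ruzsa. The substance lives in Theorem~\ref{thm:lambda}, which rests on the \emph{mixed} energy $\tilde E_k(A;u)$ --- the quantity counting simultaneous collisions $\prod a_i = \prod b_i$ \emph{and} $\prod(a_i+u)=\prod(b_i+u)$ --- together with a $p$-adic valuation argument (Lemma~\ref{thm:basecase}, via Freiman's Lemma and Dirichlet polynomials) as the base, amplified through the separating-constant / graph-fibering / induction-on-scales scheme of Sections~\ref{sec:fibers}--\ref{sec:conclusion2}. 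None of this appears in your sketch.

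The concrete gap is your base case. You propose to prove the $\ell_\infty$ estimate ``$\#\{(x,y)\in A\times A: c_1x+c_2y=1\}\lesssim K^{O(1)}|A|^{o(1)}$'' via Szemer\'edi--Trotter after reducing mod a prime. But Szemer\'edi--Trotter (over $\mathbb R$, $\mathbb F_p$, or anywhere) is an $\ell_2$ statement: it bounds \emph{total} incidences over a family of lines, and for a single line $c_1x+c_2y=1$ it gives nothing beyond the trivial $\min\{|A|,\ldots\}$. The $\ell_\infty$ bound of Theorem~\ref{thm:BS_almost_subgroups} is strictly stronger than anything an incidence theorem can yield directly; indeed the paper deduces Theorem~\ref{thm:BS_almost_subgroups} \emph{from} the same Theorem~\ref{thm:lambda}, so using it as your base case without an independent proof is circular. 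Moreover, Bourgain--Chang \cite{BC} does not use Szemer\'edi--Trotter over prime fields; the ``reduce mod $p$'' device you attribute to them is actually the $p$-adic valuation argument inherited from Chang \cite{C}, and it serves a completely different purpose (detecting that a unique minimal valuation is impossible in a multiplicative collision), not importing an incidence bound. Over $\mathbb R$, ST-based arguments for a set with $|AA|\le K|A|$ cannot reach exponents anywhere near $|A+A|\gtrsim |A|^{2-\epsilon}$; the strength here comes precisely from the arithmetic of $\mathbb Q$, which the incidence route discards. Finally, your inductive step (peeling off $a_k+u$) does not account for the fact that the small-doubling hypothesis lives on $A$ while the expansion lives on $A+u$; the paper's mixed-energy formulation is designed exactly to carry both structures through the induction, and without it the peeling step has no handle on the shifted product.
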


In fact, we prove a more general version of Theorem \ref{thm:main1} in terms of certain weighted energies and so-called $\Lambda$-constants (see Theorem \ref{thm:lambda} for the general statement that implies Theorem \ref{thm:main1} - see sections \ref{sec:energy} and \ref{sec:lambda} for the relevant definitions of energy and $\Lambda$-constants). This more general result is what allows us to deduce Theorem \ref{thm:mainmain}.

\subsection{A subspace type theorem -- an $l_\infty$ sum-product estimate}

It appears that Theorem \ref{thm:mainmain}, as well as the forthcoming generalised form of Theorem \ref{thm:main1}, lead to some interesting new applications. To illustrate the strength of these sum-product results, we present three applications in this paper.

Our main application concerns a variant of the celebrated Subspace Theorem by Evertse, Schmidt and Schlikewei \cite{evertse2002linear} which, after quantitative improvements by Amoroso and Viada \cite{amoroso2009small}, reads as follows.

Suppose $a_1, \ldots,  a_k \in \mathbb{C}^*$, $\alpha_1,\ldots,\alpha_r \in \mathbb C^*$ and define
$$
\Gamma = \{\alpha_1^{z_1} \cdots \alpha_r^{z_r}, z_i \in \mathbb{Z} \},
$$
so $\Gamma$ is a free multiplicative group\footnote{The original theorem is formulated in a more general setting, namely for the division group of $\Gamma$, but we will stick to the current formulation for simplicity.} of rank $r$. Consider the equation
\begin{equation} \label{eq:subspace_eq}
a_1x_1 + a_2x_2 + \cdots + a_kx_k = 1 
\end{equation}
with $a_i \in \mathbb{C}^*$ viewed as fixed coefficients and $x_i \in \Gamma$ as variables. A solution $(x_1, \ldots, x_k)$ to (\ref{eq:subspace_eq}) is called \emph{nondegenerate} if
for any non-empty $J \subsetneq \{1, \ldots, k \}$
$$
	\sum_{i \in J} a_ix_i \neq 0.
$$

\begin{Theorem}[The Subspace Theorem, \cite{evertse2002linear} \cite{amoroso2009small} ] \label{thm:subspace}
The number $A(k, r)$ of nondegenerate  solutions to (\ref{eq:subspace_eq}) satisfies the bound
\begin{equation} \label{subspace_thm_ineq}
A(k, r) \leq {(8k)}^{4k^4(k + kr + 1)}.
\end{equation}
\end{Theorem}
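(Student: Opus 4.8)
\emph{A proposed approach.} This is the explicit form of the Evertse--Schmidt--Schlikewei bound for linear equations in a finitely generated multiplicative group, as sharpened by Amoroso and Viada, so what I sketch is the structure of their argument rather than a new proof; the point worth emphasising is \emph{why} a bound depending only on $k$ and $r$ — and not on the heights of the $a_i$ or of the generators $\alpha_j$, nor on the degree of the field they generate — is available at all. The plan is to reinterpret a solution $(x_1,\dots,x_k)$ geometrically as a point of $\Gamma^k$ lying on the hypersurface
\[
V_{\aa}:\qquad a_1x_1+\cdots+a_kx_k=1
\]
inside the torus $\mathbb{G}_m^k$, the nondegeneracy condition being exactly the exclusion of the loci $\{\sum_{i\in J}a_ix_i=0\}$, each of which is a translate of a proper subtorus. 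Writing $\Gamma=\Gamma_{\mathrm{tors}}\times\Lambda$ with $\Lambda\cong\ZZ^r$ and embedding $\Lambda$ as a lattice in $\RR^r$ via normalised logarithmic valuations over a finite set of places, one arranges that the Weil height of a point of $\Gamma^k$ is comparable to the Euclidean length of the associated lattice vector.

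The count is then split according to a height threshold $T=T(k)$, chosen as an absolute constant. For the \textbf{large} solutions one invokes a quantitative toric Bogomolov inequality (Amoroso--David, Amoroso--Viada): after removing the finitely many torsion-coset components, the essential minimum of the Weil height on $V_{\aa}$ is bounded below by an explicit $c(k)>0$, so only $O_k(1)$ solutions can have small height, while a \emph{gap principle} shows that two solutions whose logarithmic-coordinate vectors are nearly parallel must satisfy a nontrivial linear sub-relation among the $a_ix_i$, hence lie on a lower-dimensional $V_{\aa'}$ — so that induction on $k$ bounds the number of such ``directions'' and the gap principle bounds the number of solutions per direction. For the \textbf{small} solutions the associated lattice vectors lie in a ball of radius $O_k(T)$; a naive volume estimate is \emph{not} uniform in $\Lambda$, so one instead combines the successive-minima formalism with the essential-minimum bound (which forces the first successive minimum of $\Lambda$ to be $\gtrsim c(k)$) to bound the number of lattice points by $O_k(T)^r$. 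Carrying every constant through the recursion — depth $k$, each step costing a factor polynomial in $k$ in the exponent, and the $r$-dependence entering only through the small-solution ball count — collapses to the closed form $(8k)^{4k^4(k+kr+1)}$.

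The main obstacle, and the conceptual heart of the matter, is achieving \emph{uniformity in the arithmetic data}: the classical Diophantine input (the quantitative Subspace Theorem, transcendence measures) produces bounds that degrade with the degree of the field generated by the $\alpha_j$ or with their heights, which is fatal here. The decisive replacement is the explicit Lehmer-type lower bound for the essential minimum on subvarieties of tori, whose constant is polynomial in $k$ and field-independent; everything else is geometry of numbers together with careful bookkeeping of the induction. A secondary technical point is the correct treatment of the intermediate strata — components of $V_{\aa}$ that are translates, but not torsion translates, of proper subtori — where one feeds Zhang's inequality on the successive minima of the height, rather than the bare essential-minimum bound, into the counting.
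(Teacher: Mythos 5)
The paper does not prove Theorem~\ref{thm:subspace}: it is quoted directly from \cite{evertse2002linear} and \cite{amoroso2009small} and used as a black box to derive Corollary~\ref{corr:subspace_almost_subgroups}. There is therefore no in-paper argument to compare your sketch against, and verifying the explicit exponent $4k^4(k+kr+1)$ is outside the scope of the paper under review.

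As a reading guide to the cited literature, your sketch is broadly faithful. The recasting of solutions as points of $\Gamma^k$ on a hypersurface in $\mathbb{G}_m^k$, the dichotomy by height, the field-independent and polynomial-in-$k$ Lehmer/Bogomolov-type lower bound for the essential minimum on subvarieties of tori (Amoroso--David, Amoroso--Viada), the gap principle combined with induction on $k$, and the observation that the essential-minimum input replaces the quantitative Subspace Theorem as the Diophantine engine and is precisely what removes the dependence on the heights and degrees of the $a_i$ and $\alpha_j$ --- these are indeed the load-bearing ideas of \cite{amoroso2009small}. You also correctly flag the role of Zhang's inequality on the non-torsion translate strata. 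One substantive point I would push back on: your resolution of the small-solution count, namely that ``the essential-minimum bound forces the first successive minimum of $\Lambda$ to be $\gtrsim c(k)$'', is not correct as written. The essential-minimum bound controls heights of algebraic points lying on the variety; it says nothing about the geometry of the abstract lattice $\Lambda$ built from the generators $\alpha_j$, whose successive minima can be made arbitrarily small (take $\alpha_j$ approaching $1$). The small-height regime in \cite{amoroso2009small} is handled by a more delicate argument that does not reduce to a naive lattice-point count. At the level of a sketch this is a repairable slip, but it does mean the proposal as written would not close. More to the point, since the paper itself supplies no proof, the appropriate treatment here is a citation, not a reconstruction.
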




The Subspace Theorem dovetails nicely to the following version of the Freiman Lemma. 
\begin{Theorem} \label{thm:FR-lemma}
Let $(G, \cdot)$ be a torsion-free abelian group and 
$A \subset G$ with $|AA| < K|A|$. Then $A$ is contained in a subgroup $G' < G$ of rank at most $K$.
\end{Theorem}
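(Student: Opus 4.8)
The plan is to use the structure theory of finitely generated abelian groups together with a counting (or rather a rank-minimality) argument of the Freiman-type. First I would reduce to the case where $A$ generates $G$, since we only care about the subgroup $\langle A \rangle$ generated by $A$; after this reduction $G$ is a finitely generated torsion-free abelian group, hence $G \cong \mathbb{Z}^n$ for some $n$, and the goal becomes to show $n \leq K$ (here I am thinking of $G$ written multiplicatively, so "rank" means the rank of this free abelian group).

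\medskip

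The key idea is as follows. Suppose, for contradiction, that $\operatorname{rank}\langle A\rangle = n > K$, and assume without loss of generality (by passing to a generating subset) that $n \geq |A|$ is impossible since $A$ generates a rank-$n$ group so $|A| \geq n$; more precisely, among all subsets of $A$ I would pick one, still call it $A$, that is minimal with respect to inclusion subject to $\operatorname{rank}\langle A\rangle = n$. Minimality forces every element of $A$ to be "needed", which in the free abelian setting means: writing the elements of $A$ as vectors in $\mathbb{Z}^n$ after a change of basis, no proper subset spans a rank-$n$ sublattice — equivalently the elements are in some sense in "general position" with respect to the rank. The combinatorial heart is then to show that such an $A$ must have a large product set: one wants $|AA| \geq |A| + (n-1)$ or, better, a bound forcing $|AA|/|A|$ to grow. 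This is the exact multiplicative analogue of the classical Freiman lemma $|A+A| \geq (d+1)|A| - \binom{d+1}{2}$ for $A \subset \mathbb{R}^d$ affinely spanning, and in fact one can invoke that lemma directly: pick elements $a_0, a_1, \dots, a_n \in A$ whose images span, translate multiplicatively by $a_0^{-1}$ (i.e., replace $A$ by $a_0^{-1}A$, which changes neither $|AA|$ nor the rank nor torsion-freeness), and observe that $\log|\cdot|$ or rather the coordinate embedding $\mathbb{Z}^n \hookrightarrow \mathbb{R}^n$ sends $A$ to a set affinely spanning $\mathbb{R}^n$; Freiman's lemma then gives $|AA| = |A + A|_{\text{additive in } \mathbb{Z}^n} \geq (n+1)|A| - \binom{n+1}{2}$.

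\medskip

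From $|AA| \geq (n+1)|A| - \binom{n+1}{2}$ and the hypothesis $|AA| < K|A|$ one gets $(n+1)|A| - \binom{n+1}{2} < K|A|$, i.e. $(n+1 - K)|A| < \binom{n+1}{2}$. If $n \geq K$ this reads $(n+1-K)|A| < \binom{n+1}{2}$; since we also have the trivial bound $|A| \geq n+1$ (we selected $n+1$ elements with affinely independent images, actually $|A| \geq n$ suffices with a slightly more careful count, but let us not fuss), this forces $(n+1-K)(n+1) \leq (n+1-K)|A| < \binom{n+1}{2} = \frac{n(n+1)}{2}$, hence $n + 1 - K < n/2$, i.e. $n < 2K - 2 < 2K$. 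This only gives rank $< 2K$, not $< K$, so to get the sharp constant $K$ I would instead run the minimality/dimension-reduction argument iteratively: remove one element of $A$ at a time, at each stage either the rank drops (and we recurse on a smaller-rank problem) or it does not, and track how $|AA|$ accumulates — this is precisely Freiman's induction and yields the clean bound rank $\leq K$ (one shows directly that if rank is $n$ then $|AA| \geq n|A|$, which is slightly stronger in the relevant regime and immediately gives $n < K$, contradiction with $n \geq K$). The main obstacle is making this last induction clean: one must be careful that at each step the residual set still generates the residual group and that torsion-freeness is preserved under the quotients/projections used, but torsion-freeness of $G$ and the fact that we never need to quotient — only pass to subgroups and to subsets — sidesteps the usual difficulties. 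I expect the cleanest writeup simply cites Freiman's lemma over $\mathbb{Z}^n$ in the form $|A+A| \geq n|A|$ for $A$ generating $\mathbb{Z}^n$ (after translation to contain $0$), which is standard.
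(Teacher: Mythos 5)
Your first half follows the natural route, and the one the paper itself uses where this circle of ideas actually matters: pass to $\langle A\rangle\cong\ZZ^n$ and invoke Freiman's Lemma (Theorem~\ref{thm:FreimanLemma}), exactly as in the proof of Theorem~\ref{thm:chang2}. But two things go wrong. A small slip first: translating by $a_0^{-1}$ does not make the image of $A$ affinely span $\RR^n$ --- a set generating a rank-$n$ group can have affine dimension $n-1$ (the standard basis does), so Freiman only guarantees $|AA|\ge n|A|-\binom{n}{2}$, not $(n+1)|A|-\binom{n+1}{2}$. The fatal problem is the inequality you lean on to recover the constant $K$, namely that rank $\langle A\rangle=n$ forces $|AA|\ge n|A|$. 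This is false: take $A=\{p_1,\dots,p_n\}$ to be $n$ multiplicatively independent elements, say $n$ distinct primes in the group of positive rationals; then $|AA|=\binom{n+1}{2}=\tfrac{n+1}{2}|A|$, far below $n|A|$. No Freiman-type induction can prove your claimed strengthening, because the $-\binom{d+1}{2}$ defect in Freiman's Lemma is sharp and is exactly what this example exploits.

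In fact the same example shows that Theorem~\ref{thm:FR-lemma} as printed cannot be proved without an additional hypothesis: with $K=\tfrac{n+2}{2}$ and $n\ge 3$ one has $|AA|<K|A|$, yet $A$ lies in no subgroup of rank less than $n>K$. What is true, and all the paper ever uses, is the version with a largeness caveat: Freiman's Lemma gives, for the affine (multiplicative) dimension $d$ of the exponent vectors, $(d+1)|A|-\binom{d+1}{2}\le |AA|<K|A|$, hence $d\le K$ once $|A|$ is large in terms of $K$ (this is why the proof of Theorem~\ref{thm:chang2} says ``with $|A|$ sufficiently large''), and only rank $O(K)$ for arbitrary $|A|$, which is what your own computation produced. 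Note also that the paper gives no proof of Theorem~\ref{thm:FR-lemma}; it cites it as a known form of Freiman's Lemma and uses it only for the motivational Corollary~\ref{corr:subspace_almost_subgroups}. So the honest fix for your writeup is to stop at the Freiman step and either assume $|A|$ large compared with $K$ (absorbing the $\binom{d+1}{2}$ term) or accept a bound of the shape $2K$; the clean inequality $|AA|\ge n|A|$ is simply not available.
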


Now assume for simplicity that $A \subset \mathbb{Q}$ and $|AA| \leq K|A|$. 
 Let us call such sets (this definition generalizes of course to an arbitrary ambient group) $K$-\emph{almost subgroups} \footnote{One could've used a more general framework of $K$-\emph{approximate subgroups} introduced by Tao. We decided to introduce a simpler definition in order to avoid technicalities. However, in the abelian setting the definitions are essentially equivalent.}. 
 

We now show that it is natural to expect that the Subspace Theorem generalises to $K$-almost subgroups with $K$ taken as a proxy for the group rank. A straightforward corollary of Theorem~\ref{thm:FR-lemma} and Theorem~\ref{thm:subspace} is as follows. 

\begin{Corollary}[Subspace Theorem for $K$-almost subgroups] \label{corr:subspace_almost_subgroups}
  Let $A$ be a $K$-almost subgroup. Then the number $A(k, K)$ of non-degenerate solutions $(x_1, x_2, \ldots, x_k) \in A^k$ to
$$
c_1x_1 + c_2x_2 + \ldots + c_kx_k = 1
$$  
with fixed coefficients $c_i \in \mathbb{C^*}$ is bounded by 
$$ 
A(k, K) \leq {(8k)}^{4k^4(k + kK + 1)}.
$$
  
\end{Corollary}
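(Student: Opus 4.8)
The plan is to deduce the statement exactly as its placement suggests: feed the multiplicative structure of $A$ into the Subspace Theorem, with the rank bound supplied by the Freiman Lemma. Concretely, Theorem~\ref{thm:FR-lemma} shows that $A$ sits inside a multiplicative subgroup of $\mathbb{Q}^*$ of rank at most $K$, and Theorem~\ref{thm:subspace} then counts the nondegenerate solutions inside that group with $r=K$.

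First I would remove the element $0$ from $A$ if it is present: a nondegenerate solution cannot have a zero coordinate, since if $k\ge 2$ and $x_i=0$ then the proper nonempty subset $J=\{i\}$ gives $c_ix_i=0$, contradicting nondegeneracy (and $k=1$ is trivial, having at most one solution). Hence the number of nondegenerate solutions over $A^k$ equals that over $(A\setminus\{0\})^k$, and we may assume $A\subset\mathbb{Q}^*$ with $|AA|\le K|A|$ (at worst changing $|A|$ by one element). Next, $A$ generates a subgroup of the multiplicative group $\mathbb{Q}^*$, whose torsion subgroup is just $\{\pm1\}$; applying Theorem~\ref{thm:FR-lemma} (after passing to the torsion-free quotient $\mathbb{Q}^*/\{\pm1\}\cong\mathbb{Q}_{>0}$, which does not increase the doubling) we obtain a subgroup $\Gamma<\mathbb{Q}^*$ of torsion-free rank $r\le K$ with $A\subset\Gamma$. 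Choosing generators we may write $\Gamma\subseteq\{\pm\alpha_1^{z_1}\cdots\alpha_r^{z_r}:z_i\in\mathbb{Z}\}$ for suitable $\alpha_j\in\mathbb{Q}^*$; the finite torsion factor $\{\pm1\}$ is absorbed by working with the division group of $\langle\alpha_1,\dots,\alpha_r\rangle$, which is the setting of the general form of Theorem~\ref{thm:subspace} alluded to in the footnote there.

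Finally, any nondegenerate solution $(x_1,\dots,x_k)\in A^k$ of $c_1x_1+\dots+c_kx_k=1$ is, a fortiori, a nondegenerate solution of (\ref{eq:subspace_eq}) with coefficients $a_i=c_i\in\mathbb{C}^*$ and variables in the rank-$r$ group $\Gamma$, since nondegeneracy depends only on the tuple $(x_1,\dots,x_k)$ and the coefficients, not on the ambient group. Theorem~\ref{thm:subspace} therefore bounds their number by $(8k)^{4k^4(k+kr+1)}\le (8k)^{4k^4(k+kK+1)}$, which is the claim. There is no genuine obstacle here: the only points needing a word are the removal of $0$ and the presence of the torsion element $-1$, both routine bookkeeping, which is why the result is billed as a straightforward corollary. (If one prefers to sidestep these minor nuisances, one may state and prove the corollary for $A\subset\mathbb{Q}_{>0}$, where the ambient group is already torsion-free and free, and Theorem~\ref{thm:FR-lemma} applies verbatim with rank at most $K$.)
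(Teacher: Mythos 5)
Your approach matches the paper's exactly: the authors simply call Corollary~\ref{corr:subspace_almost_subgroups} a ``straightforward corollary'' of Theorems~\ref{thm:FR-lemma} and~\ref{thm:subspace} and give no further proof, and your argument — Freiman's Lemma supplying the rank bound $r\le K$, then the Subspace Theorem counting nondegenerate solutions inside the ambient rank-$r$ group — is precisely what the paper has in mind. Your handling of the two edge cases (removing $0$, and the torsion $\{\pm1\}$ in $\mathbb{Q}^*$) is sound in spirit and is in fact more careful than the paper, which applies a torsion-free Freiman's Lemma to a subset of the non-torsion-free group $\mathbb{Q}^*$ without comment.

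One small imprecision worth flagging: the parenthetical claim that passing to $\mathbb{Q}^*/\{\pm1\}\cong\mathbb{Q}_{>0}$ ``does not increase the doubling'' is not obviously true. The projection $\pi(x)=|x|$ satisfies $\pi(AA)=\pi(A)\pi(A)$, so $|\pi(A)\pi(A)|\le|AA|\le K|A|$, but all one can say in general is $|\pi(A)|\ge|A|/2$, which gives the doubling of $\pi(A)$ at most $2K$. That would yield $(8k)^{4k^4(k+2kK+1)}$ rather than the stated $(8k)^{4k^4(k+kK+1)}$ — a harmless weakening given the shape of the bound, and one the paper itself implicitly incurs, since Theorem~\ref{thm:FR-lemma} as stated requires a torsion-free ambient group. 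Your closing remark, that one may simply assume $A\subset\mathbb{Q}_{>0}$ to avoid the bookkeeping, is indeed the cleanest fix and gives the stated bound verbatim.
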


Similarly to Theorem~\ref{BCmain}, the bound of Corollary~\ref{corr:subspace_almost_subgroups} becomes trivial when $A$ is large and $K$ is larger than $c\log |A|$ for some small $c > 0$. 

We conjecture that a much stronger polynomial bound holds.

\begin{Conjecture} \label{conj:Ksubspace}
 There is a constant $c(k)$ such that Corollary~\ref{corr:subspace_almost_subgroups} holds with the bound
 $$
  A(k, K) \leq K^{c(k)}.
 $$
\end{Conjecture}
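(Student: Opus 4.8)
Since the displayed bound is a conjecture rather than a theorem, we describe the natural line of attack and the point at which it stalls. The plan is to reduce to the two-variable equation and then bring in the iterated-product-set machinery behind Theorem~\ref{thm:main1} (in its general form, Theorem~\ref{thm:lambda}). For the reduction to $k=2$: given a nondegenerate solution of $c_1x_1+\dots+c_kx_k=1$, either some proper sub-sum $\sum_{i\in J}c_ix_i$ takes few values --- in which case we induct on $k$, using that $A^{(O(1))}$ is again a $K^{O(1)}$-almost subgroup by Plünnecke--Ruzsa --- or it takes many values, in which case setting $T=\sum_{i\in J}c_ix_i$ reduces us to the shorter equation $T+\sum_{i\notin J}c_ix_i=1$ after a dyadic pigeonholing of the fibres. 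Iterating, the crux is the case $k=2$: bound the number $m$ of pairs $(x,y)\in A\times A$ with $c_1x+c_2y=1$, where $|AA|\le K|A|$.

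Rewrite such a solution as $y=\lambda(x+u)$ with $\lambda=-c_1/c_2$ and $u=-1/c_1$. The solutions thus single out a set $A'\subseteq A$ with $|A'|=m$ and $\psi(A')\subseteq A$ for the affine map $\psi(x)=\lambda(x+u)$. Setting $D:=A'+u=\lambda^{-1}\psi(A')$ we get $D\subseteq\lambda^{-1}A$, $D-u=A'\subseteq A$, and hence $|DD|\le|AA|=K|A|$. Now apply the general form of Theorem~\ref{thm:main1} to $D$ with the shift $-u$ and a large product length $\ell$: it gives $|(D-u)^{(\ell)}|\ge|D|^{\ell(1-\gamma)-1}/(|DD|/|D|)^{C\ell}$, while $(D-u)^{(\ell)}=(A')^{(\ell)}\subseteq A^{(\ell)}$ and $|A^{(\ell)}|\le K^{\ell-1}|A|$ by Plünnecke--Ruzsa. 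Comparing the two estimates pins $m$ from above. A more careful bookkeeping of the same idea phrased through the weighted multiplicative energies and $\Lambda$-constants of Theorem~\ref{thm:lambda}, rather than through raw cardinalities, together with an iteration over $\ell$, is what upgrades this to the $l_\infty$ bound $m\le K^{C}|A|^{\gamma}$ quoted in the abstract, and hence --- through the reduction above --- to the general-$k$ form of Corollary~\ref{corr:subspace_almost_subgroups} with an extra $|A|^{\gamma}$ factor.

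The main obstacle, and precisely why the displayed bound remains conjectural, is that this scheme is irreducibly lossy in the regime the conjecture is about, namely $m$ small. When $m$ is small the set $D$ has only $m$ elements but no doubling control beyond $|DD|\le K|A|$, so its effective doubling $|DD|/|D|$ can be as large as $K|A|/m$ and Theorem~\ref{thm:main1} degrades accordingly: the inequality one extracts always leaves a genuine power of $|A|$ on the right, which one can push down to $|A|^{\gamma}$ for any fixed $\gamma>0$ but never to $|A|^{0}$. Equivalently, after Freiman's Lemma (Theorem~\ref{thm:FR-lemma}) one is asking for a count of solutions to a two-term $S$-unit equation in a group of rank $\le K$ that is polynomial in $K$ and independent of $|A|$; that is a polynomial Subspace Theorem for almost-subgroups, and no available input --- neither the Subspace Theorem itself (Theorem~\ref{thm:subspace}), which gives only $\exp(O(K))$, nor the product-growth estimates of this paper, which pay an $|A|^{\gamma}$ toll --- removes this loss. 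A plausible but, at present, incomplete route toward the conjecture would be to iterate (tensor-power) the self-similarity $\psi(A')\subseteq A$ so that the gain compounds multiplicatively in the number of iterations while $K$ stays polynomially bounded; making the compounded gain beat \emph{every} power of $|A|$ uniformly in $K$ is the part we do not see how to carry out.
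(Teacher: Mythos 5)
This is a conjecture in the paper, not a theorem, and you correctly treat it as such: the authors offer no proof, only the supporting special case Theorem~\ref{thm:BS_almost_subgroups} ($k=2$, rational coefficients) which carries the $|A|^{\gamma}$ penalty that the conjecture asks to remove. Your bottom-line diagnosis --- that the available machinery always leaves a genuine power of $|A|$ on the right, pushable below any fixed $|A|^{\gamma}$ but never to $|A|^{0}$ --- agrees with the paper's own framing.

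However, the place where you locate the loss is not quite where the paper's argument actually pays it. You argue via the set $D=A'+u$ and worry that $D$ has only the doubling bound $|DD|\le K|A|$, so that $|DD|/|D|$ can be as large as $K|A|/m$ when $m$ is small, making Theorem~\ref{thm:main1} degrade. But the proof of Theorem~\ref{thm:BS_almost_subgroups} sidesteps this entirely through the monotonicity of $\Lambda$-constants (Corollary~\ref{corr:stability}): writing $S\subseteq A$ for the first coordinates of solutions, one has $\Lambda_k(S;u)\le\Lambda_k(A;u)$ unconditionally, so only $A$'s doubling enters and the smallness of $S$ costs nothing. The irreducible $|A|^{\gamma}$ comes instead from Theorem~\ref{thm:lambda} itself, $\Lambda_k(A;u)\le K^{C}|A|^{\gamma}$, which the iteration scheme of Sections~\ref{sec:fibers}--\ref{sec:conclusion2} cannot push to a bound independent of $|A|$. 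Your aside that ``bookkeeping through $\Lambda$-constants'' is what upgrades the argument suggests you are aware of this, but the preceding sentences misattribute the loss to $D$'s doubling.

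Finally, the general-$k$ reduction you sketch (dyadic pigeonholing the values of a sub-sum, then induction using Pl\"unnecke--Ruzsa to keep $A^{(O(1))}$ a $K^{O(1)}$-almost subgroup) is not in the paper and is not a routine step: non-degeneracy of the original $k$-term solution is not preserved when one fixes a sub-sum value, and the ``few values'' branch of the dichotomy does not obviously close the induction with a bound of the conjectured shape $K^{c(k)}$. Since the paper's evidence is confined to $k=2$, this part of your plan should be flagged as genuinely open rather than ``natural but incomplete.''
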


We can support Conjecture~\ref{conj:Ksubspace} with a special case $k = 2$ and $A \subset \mathbb{Q}, c_i \in \mathbb{Q}$ and a somewhat weaker estimate, which we see as a proxy for the Beukers-Schlikewei Theorem \cite{beukers1996equation}.

\begin{Theorem}[Weak Beukers-Schlikewei for $K$-almost subgroups] \label{thm:BS_almost_subgroups}
 For any $\gamma > 0$ there is $C(\gamma) > 0$ such that for any $K$-almost subgroup $A \subset \mathbb{Q}$ and fixed non-zero $c_1, c_2 \in \mathbb{Q}$ the number $A(2, K)$ of solutions $(x_1, x_2) \in A^2$ to 
 $$
 c_1x_1 + c_2 x_2 = 1
 $$
 is bounded by 
 $$
 A(2, K) \leq |A|^\gamma K^C.
 $$

\end{Theorem}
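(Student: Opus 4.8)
The plan is to deduce Theorem~\ref{thm:BS_almost_subgroups} from Theorem~\ref{thm:main1} (or rather its weighted generalisation) by a counting argument that converts a large number of solutions to $c_1x_1+c_2x_2=1$ into an unexpectedly small iterated product set of a shift of $A$. Suppose, for contradiction, that there are more than $|A|^\gamma K^C$ solutions $(x_1,x_2)\in A^2$; call the set of such solutions $S$. The key observation is that each solution gives the multiplicative relation $c_2x_2 = 1-c_1x_1$, so the points $1-c_1x_1$ all lie in the dilate $c_2 A$, i.e. the map $x\mapsto 1-c_1x$ sends many elements of $A$ into a multiplicative copy of $A$. Equivalently, after the affine change of variables $x = (1-t)/c_1$ one has $t \in A' := c_2 A$ and $x \in A$, so a large subset $A''$ of the shifted set $c_1 A + 1$ (or a suitable rescaling thereof) satisfies: $A''$ and a scalar multiple of $A''$ simultaneously sit inside multiplicative structures controlled by $A$.

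Concretely, I would first reduce to the case where many solutions share structure: by pigeonholing on the popular fibres, extract a set $B\subseteq A$ of size $\gg |A|^\gamma K^C/|A|$ (up to logs) such that for every $x\in B$, the shifted value $x+u$ — for $u := -1/c_1$, after absorbing constants — lies in a fixed multiplicative dilate of $A$, hence $(B+u) \subseteq \lambda A$ for some $\lambda\in\mathbb Q^*$. Then iterate: $(B+u)^{(k)} \subseteq \lambda^k A^{(k)}$, so $|(B+u)^{(k)}| \le |A^{(k)}| \le |AA|^{\text{(something)}}$. But $|AA| = K|A|$ forces, by Plünnecke--Ruzsa for the multiplicative group, $|A^{(k)}| \le K^{O(k)}|A|$. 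On the other hand, applying Theorem~\ref{thm:main1} to the set $B$ (note $|BB|\le|AA| = K|A| \le (K|A|/|B|)\,|B|$, so $B$ has multiplicative doubling at most $K' := K|A|/|B|$) gives $|(B+u)^{(k)}| \ge |B|^{k(1-\gamma')-1}/(K')^{Ck}$ for any small $\gamma'$. Combining the upper bound $K^{O(k)}|A|$ with this lower bound and choosing $k=k(\gamma)$ large enough yields a contradiction provided $|B|$ is polynomially large in $|A|$, i.e. provided $|S| \gg |A|^{1+\gamma}K^{C}$; tuning $C = C(\gamma)$ and $k$ then gives exactly the claimed bound $A(2,K)\le |A|^\gamma K^C$.

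The main obstacle I anticipate is the bookkeeping of the doubling constant of the extracted subset $B$. Passing from $A$ to a small subset $B$ is dangerous because multiplicative doubling is not inherited by subsets in a controlled way in general — $|BB|$ could be as large as $|B|^2$ even if $|AA|$ is tiny. The saving grace is that we only need $|BB| \le |AA| = K|A|$, which is automatic since $BB\subseteq AA$; the cost is that relative to $|B|$ the doubling is $K|A|/|B|$, which is large when $B$ is small. So the real tension is between wanting $B$ large (to make the $\Lambda$-constant / energy input of Theorem~\ref{thm:main1} bite) and the fact that the number of solutions only guarantees $|B|\gtrsim |S|/|A|$. This is precisely where the exponent $k(1-\gamma)-1$ in Theorem~\ref{thm:main1} matters: the $-1$ absorbs one factor of $|A|$, and the freedom to take $k$ as large as we like lets the $|B|^{k(1-\gamma)}$ term dominate $K^{O(k)}|A|$ once $|B| \ge |A|^{\gamma}$-ish. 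I would therefore run the argument in terms of the weighted-energy / $\Lambda$-constant version (Theorem~\ref{thm:lambda}) rather than the clean statement of Theorem~\ref{thm:main1}, since that is the form robust enough to track the passage to a subset and to the popular fibres; the degenerate-solution hypothesis of the Subspace setting does not even need to appear here since for $k=2$ the only degeneracy is $c_1x_1=0$ or $c_2x_2=0$, excluded by $c_i\neq 0$ and (after removing it) $0\notin A$.

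Finally, a remark on the affine-to-multiplicative passage: one must be slightly careful that the shift $u$ produced is non-zero and rational, which it is since $u$ is a fixed ratio of the non-zero rationals $c_1,c_2$ and $1$; and one should first discard the at most one element of $A$ equal to $-u$ (if present) so that every $x\in B$ has $x+u\neq 0$, keeping all product sets inside $\mathbb Q^*$ where Plünnecke--Ruzsa applies. With these caveats the scheme above should go through, and the quantitative dependence $C=C(\gamma)$ comes out of optimising $k=k(\gamma)$ in the final inequality.
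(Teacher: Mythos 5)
Your high-level strategy coincides with the paper's, and you correctly anticipate that the robust tool is the weighted/$\Lambda$-constant machinery (Theorem~\ref{thm:lambda} and Corollary~\ref{corr:stability}) rather than the bare statement of Theorem~\ref{thm:main1}. However, there are two genuine gaps in the sketch as written. First, the pigeonholing step is both unnecessary and fatal: for $k=2$ the equation $c_1x_1+c_2x_2=1$ determines $x_2$ uniquely from $x_1$, so the projection onto the first coordinate is \emph{injective}, and the set $S$ of admissible $x_1$'s already has $|S|=A(2,K)$, with every $x_1\in S$ satisfying $x_1-1/c_1=-(c_2/c_1)x_2\in -(c_2/c_1)A$ for the \emph{same} dilate. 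Your extraction of a $B$ of size $\gg |A|^{\gamma}K^{C}/|A|$ throws away a factor of $|A|$, and the symptom shows up immediately: you write that the contradiction only engages when $|S|\gg|A|^{1+\gamma}K^{C}$, which is the trivial bound $|A|^2$ and inconsistent with the $|A|^{\gamma}K^{C}$ you claim to conclude.

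Second, the route through the unweighted Theorem~\ref{thm:main1} applied to $B=S$ with doubling proxy $K'=K|A|/|S|$ genuinely does not close: since $|S|$ may be far smaller than $|A|$, the factor $(K|A|/|S|)^{Ck}$ (with $C=C(\gamma',k)$ a positive constant) overwhelms the gain from $|S|^{k(1-\gamma')-1}$ for every choice of $k$, so no amount of optimisation in $k$ recovers the $|A|^{\gamma}$ bound. You diagnose this correctly in your ``main obstacle'' paragraph, and the remedy you tentatively name is exactly the paper's: never estimate the doubling of $S$ at all. Instead use Corollary~\ref{corr:stability} to get $\tilde E_k(S;u)\le \Lambda_k(A;u)^k|S|^k$, bound $\Lambda_k(A;u)\le K^{C}|A|^{\gamma'}$ by Theorem~\ref{thm:lambda}, apply Lemma~\ref{lem:CSbasic} to obtain $|S|^k\le\bigl(K^{kC}|A|^{k\gamma'}|S|^k\bigr)^{1/2}\max\{|S^{(k)}|,|(S-1/c_1)^{(k)}|\}$, and observe $S\subseteq A$, $S-1/c_1\subseteq(c_2/c_1)A$ together with Pl\"unnecke--Ruzsa give $\max\{|S^{(k)}|,|(S-1/c_1)^{(k)}|\}\le K^k|A|$. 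Rearranging gives $|S|\le K^{C+2}|A|^{\gamma'+2/k}$, and taking $\gamma'=\gamma/2$ and $k=\lfloor 2/\gamma'\rfloor+1$ finishes. So your ingredients are all present, but the pigeonholing must be deleted and the $\Lambda$-constant route carried out in earnest.
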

One can view Theorem \ref{thm:BS_almost_subgroups} as an $l_{\infty}$ version of the weak Erd\H{o}s-Szemer\'{e}di sum-product conjecture. The \textit{weak Erd\H{o}s-Szemer\'{e}di conjecture} is the statement that, if $|AA| \leq K|A|$ then $|A+A| \geq K^{-C}|A|^2$ for some positive absolute constant $C$. For $A \subset \mathbb Z$, this result was proved in \cite{BC}, but the conjecture remains open over the reals. 

A common approach to proving sum-product estimates is to attempt to show that, for a set $A$ with small product set, 
the \textit{additive energy} of $A$, which is defined as the quantity
\[E_+(A):= |\{ (a,b,c,d) \in A^4 : a+b=c+d \}|, \]
is small. Indeed, this was the strategy implemented in \cite{C} and \cite{BC}, the latter of which showed\footnote{This is something of an over-simplification, as \cite{BC} in fact proved a much more general result which bounded the multi-fold additive energy with weights attached.} that, for all $\gamma >0$, there is a constant $C=C(\gamma)$ such that for any $A \subset \mathbb Q$ with $|AA| \leq K|A|$,
\begin{equation} 
E_+(A) \leq K^C|A|^{2+ \gamma}. 
\label{eq:weakES}
\end{equation}
Since there are at least $|A|^2$ trivial solutions when $\{a,b\}=\{c,d\}$, this bound is close to best possible. It then follows from a standard application of the Cauchy-Schwarz inequality that
\[|A+A| \geq \frac{|A|^{2-\gamma}}{K^C}. \]
Defining the representation function $r_{A+A}(c)=|\{(a_1,a_2) \in A \times A : a_1+a_2=c\}|$, it follows that
\[E_+(A)= \sum_x r_{A+A}(x)^2,\]
and so bounds for the additive energy can be viewed as $l_2$ estimates for this representation function.

Theorem \ref{thm:BS_almost_subgroups} gives the stronger $l_{\infty}$ estimate: it says that, if $|AA| \leq K|A|$ then $r_{A+A}(c) \leq K^C|A|^{\gamma}$ for all $c \neq 0$. This implies \eqref{eq:weakES}, and thus in turn the weak Erd\H{o}s-Szemer\'{e}di sum-product conjecture. We prove Theorem~\ref{thm:BS_almost_subgroups} in Section~\ref{sec:conclusion}.


\begin{Remark}
   It is highly probable that our method can be combined with the ideas of \cite{bourgain2009sum} which would generalize Theorem~\ref{thm:BS_almost_subgroups} to $K$-almost subgroups consisting of algebraic numbers of degree at most $d$ (though not necessarily contained in the same field extension). The upper power $C$ is going to depend on $d$ then, so the putative bound (using the notation of Theorem~\ref{thm:BS_almost_subgroups}) is 
   $$
      A(2, K) \leq C'(d)|A|^\gamma K^{C(\gamma, d)}
   $$
with some $C, C' > 0$.
We are going to consider this matter in detail elsewhere. Note, however, that proving a similar statement with no dependence on $d$ seems to be a significantly harder problem. 
\end{Remark}

\subsection{Further applications}

\subsubsection{An inverse Szemer\'{e}di-Trotter Theorem} Theorem \ref{thm:BS_almost_subgroups} can be interpreted as a partial inverse to the Szemer\'{e}di-Trotter Theorem. The Szemer\'{e}di-Trotter Theorem states that, if $P$ is a finite set of points and $L$ is a finite set of lines in $\mathbb R^2$, then the number of incidences $I(P,L)$ between $P$ and $L$ satisfies the bound
\begin{equation}
I(P,L):= |\{(p,l) \in P \times L : p \in l \}| =O(|P|^{2/3}|L|^{2/3} +|P| + |L| ).
\label{eq:ST}
\end{equation}
The term $|P|^{2/3}|L|^{2/3}$ above is dominant unless the sizes of $P$ and $L$ are rather imbalanced. The Szemer\'{e}di-Trotter Theorem is tight, up to the multiplicative constant.  

It is natural to consider the inverse question: for what sets $P$ and $L$ is it possible that $I(P,L) = \Omega (|P|^{2/3}|L|^{2/3})$? The known constructions of point sets which attain many incidences appear to all have some kind of lattice like structure. This perhaps suggests the loose conjecture that point sets attaining many incidences must always have some kind of additive structure, although such a conjecture seems to be far out of reach to the known methods.

However, with an additional restriction that $P=A \times A$ with $A \subset \mathbb Q$, Theorem \ref{thm:mainmain} leads to the following partial inverse theorem, which states that if $A$ has small product set then $I(P,L)$ cannot be maximal.

\begin{Theorem} \label{thm:STinverse} For all $\gamma \geq 0$ there exists a constant $C=C(\gamma)$ such that the following holds. Let $A$ be a finite set of rationals such that $|AA| \leq K|A|$ and let $P= A \times A$. Then, for any finite set $L$ of lines in the plane, $I(P,L)\leq 3 |P| + |A|^{\gamma}K^C|L|$.
\end{Theorem}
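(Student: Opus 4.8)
The plan is to run the standard dyadic/popularity decomposition on incidences, and feed the high-multiplicity part into the $l_\infty$ sum-product bound coming from Theorem \ref{thm:BS_almost_subgroups}. Write $P = A\times A$ with $|AA|\le K|A|$, and set $n=|A|$ so $|P|=n^2$. Partition $L$ according to the number of points of $P$ on each line: for $j\ge 0$ let $L_j = \{\ell\in L : 2^j \le |P\cap\ell| < 2^{j+1}\}$. The contribution to $I(P,L)$ from lines with $|P\cap\ell|\le 2$ is at most $2|L|$, which is absorbed (generously) into the $|A|^\gamma K^C|L|$ term, so we may assume all relevant lines are rich, $|P\cap\ell|\ge 3$. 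Lines that are vertical or horizontal contribute at most $|A|\cdot|P\cap\ell|$ each but there are at most $|A|$ of each type containing a point of $P$, giving at most $2|A|\cdot|A| = 2|P|$ total; this is where the $3|P|$ on the right-hand side comes from (with a bit of room to spare). So from now on every line $\ell$ under consideration is non-axis-parallel and has $|P\cap\ell|\ge 3$.

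The key step is to bound, for a fixed non-axis-parallel line $\ell : y = \alpha x + \beta$ with $\alpha\ne 0$, the quantity $|P\cap\ell| = |\{(x,y)\in A\times A : y=\alpha x+\beta\}|$. If $\beta=0$ this counts $\{x\in A : \alpha x\in A\}$, a multiplicative-energy-type quantity, which is at most $|A|$ trivially and in fact can be bounded better, but the crude bound suffices since the number of such lines through at least two points of $P$ is limited; more importantly, if $\beta\ne 0$ then writing $c_1 = \alpha/(-\beta)$ wait — rescale: the equation $\alpha x+\beta = y$ with $x,y\in A$ and $\beta\ne0$ becomes $(-\alpha/\beta)x + (1/\beta)y = 1$, i.e. $c_1 x + c_2 y = 1$ with $c_1 = -\alpha/\beta$, $c_2 = 1/\beta$, both nonzero rationals (the coefficients are fixed once $\ell$ is fixed). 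By Theorem \ref{thm:BS_almost_subgroups}, for any $\gamma' > 0$ there is $C(\gamma')$ with
\[
|P\cap\ell| = |\{(x,y)\in A\times A : c_1x+c_2y=1\}| \le |A|^{\gamma'}K^{C(\gamma')}.
\]
This is the crucial input: every non-axis-parallel line is incident to at most $|A|^{\gamma'}K^{C(\gamma')}$ points of $P$, a quantity that does not grow with $|L|$.

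Given this uniform per-line bound, summing over all non-axis-parallel rich lines yields $\sum_{\ell} |P\cap\ell| \le |L|\cdot |A|^{\gamma'}K^{C(\gamma')}$. Combining the three pieces — axis-parallel lines contributing $\le 2|P|$, low-multiplicity lines contributing $\le 2|L| \le 2|A|^{\gamma'}K^{C}|L|$, and the main term $\le |A|^{\gamma'}K^{C}|L|$ — gives $I(P,L) \le 3|P| + 3|A|^{\gamma'}K^{C(\gamma')}|L|$; after relabelling $\gamma'$ and adjusting $C$ by an additive constant this is the claimed $I(P,L) \le 3|P| + |A|^{\gamma}K^C|L|$ (to kill the factor $3$ one can, for instance, absorb it into $K^C$ since $K\ge 1$, or note $3 \le |A|^{\gamma}$ once $|A|$ is large, handling small $|A|$ separately where the bound is trivial). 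I do not foresee a genuine obstacle here: the only subtlety is the bookkeeping of the degenerate line classes (axis-parallel lines, lines through the origin, the $|P\cap\ell|\le 2$ lines) so that they land inside the $3|P|$ and $|A|^\gamma K^C|L|$ terms respectively, and checking that Theorem \ref{thm:BS_almost_subgroups} applies with coefficients in $\mathbb{Q}\setminus\{0\}$ — which it does, since $\alpha,\beta$ are rational whenever $\ell$ passes through two rational points. All the real work has been done in Theorem \ref{thm:BS_almost_subgroups}; this theorem is essentially a reformulation of it.
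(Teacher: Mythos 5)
Your proposal is correct and follows essentially the same route as the paper: split the lines by type, observe that axis-parallel lines give at most $2|P|$, that any line with two points of $P=A\times A$ has rational coefficients, and that a line $y=\alpha x+\beta$ with $\alpha,\beta\in\QQ\setminus\{0\}$ rewritten as $c_1x+c_2y=1$ meets $P$ in at most $|A|^{\gamma}K^{C}$ points by Theorem \ref{thm:BS_almost_subgroups}; this is exactly the paper's argument (the paper dispatches irrational-coefficient lines by the same two-rational-points observation rather than via your unused dyadic classes).

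The one place where your write-up is under-justified is the $\beta=0$ case, to which Theorem \ref{thm:BS_almost_subgroups} does not apply: the claim that ``the crude bound suffices since the number of such lines through at least two points of $P$ is limited'' is not an argument (there can be as many as $|A|$ such lines, each containing up to $|A|$ points). The correct and simple fix, which the paper uses and which is evidently what your reserved third $|P|$ is for, is that every point of $P$ other than the origin lies on at most one line through the origin, so all such lines together contribute at most $|P|+|L|$ incidences.
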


In fact, not only does this show that $I(A \times A,L)$ cannot be maximal when $|AA|$ is small, but better still the number of incidences is almost bounded by the trivial linear terms in \eqref{eq:ST}. The insistence that the point set is a direct product is rather restrictive. However, since many applications of the Szemer\'{e}di-Trotter Theorem make use of direct products, it seems likely that Theorem \ref{thm:STinverse} could be useful. The proof is given in Section \ref{sec:applications}.

\subsubsection{Improved bound for the size of an additive basis of a set with small product set} 

Theorem~\ref{thm:BS_almost_subgroups} also yields the following application concerning the problem of bounding the size of an additive basis considered in \cite{shkredov2016additive}. We can significantly improve the bound in the rational setting, pushing the exponent in (\ref{eq:basis_bound}) from $1/2 + 1/442 - o_\epsilon(1)$ to $2/3 - o_\epsilon(1)$ in the limiting case $K = |A|^{\epsilon}$.

\begin{Theorem} \label{thm:additivebasis}
For any $\gamma > 0$ there exists $C(\gamma)$ such that
for an arbitrary $A \subset \mathbb{Q}$ with $|AA| = K|A|$ and $B, B' \subset \mathbb{Q}$,
$$
S := \left|\{(b, b') \in B \times B' : b + b' \in A\} \right| \leq 2|A|^\gamma K^C \min \{|B|^{1/2}|B'| + |B|, |B'|^{1/2}|B| + |B'| \}. 
$$
In particular, for any $\gamma > 0$ there exists $C(\gamma)$ such that if $A \subset B + B$ then

\begin{equation} \label{eq:basis_bound}
|B| \geq |A|^{2/3 - \gamma}K^{-C}.
\end{equation}

\end{Theorem}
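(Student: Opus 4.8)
The plan is to deduce the incidence-type bound on $S$ directly from Theorem \ref{thm:BS_almost_subgroups} via a dyadic pigeonholing on the representation function, and then to extract \eqref{eq:basis_bound} by a short optimization. First I would introduce $r(a) := |\{(b,b') \in B \times B' : b+b' = a\}|$ for $a \in A$, so that $S = \sum_{a \in A} r(a)$. Restricting to those $a$ with $r(a) \geq 1$, I dyadically decompose $A$ into sets $A_j = \{a \in A : 2^{j-1} \le r(a) < 2^j\}$ for $j = 1, \dots, \lceil \log_2(|B|) \rceil$, so that $S \le \sum_j 2^j |A_j|$. For each level $j$, I want to show $2^{j} |A_j| \lesssim |A|^{\gamma'} K^{C'}(|B|^{1/2}|B'| + |B|)$ with a small loss $\gamma'$, and then summing over the $O(\log |B|)$ levels and absorbing the logarithmic factor into $|A|^{\gamma}$ (after relabeling $\gamma$) gives the claim.

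The key step is the single-level estimate, and this is where Theorem \ref{thm:BS_almost_subgroups} enters. Fix $j$ and write $t = 2^{j-1}$, $m = |A_j|$, so each $a \in A_j$ has at least $t$ representations $b + b' = a$ with $(b,b') \in B \times B'$. Consider the quantity $\sum_{a \in A_j} r(a)^2 \ge t\, S_j$ where $S_j = \sum_{a \in A_j} r(a)$; by Cauchy–Schwarz applied to the fibers, $\sum_{a} r(a)^2$ counts solutions to $b_1 + b_1' = b_2 + b_2'$ with all four coordinates ranging appropriately, i.e. additive quadruples. The standard move is: for a positive proportion of pairs $(b_1, b_2) \in B \times B$ there are many $a \in A_j$ with $a - b_1, a - b_2 \in B'$; equivalently, many $a \in A_j$ lie in $(b_1 + B') \cap (b_2 + B')$. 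Passing to ratios to bring in the multiplicative structure of $A$: for fixed generic $b_1 \neq b_2$, the pairs $(a - b_1, a - b_2) \in B' \times B'$ with $a \in A_j$ satisfy the single linear relation $(a - b_1) - (a - b_2) = b_2 - b_1$, a fixed nonzero constant. But to apply Theorem \ref{thm:BS_almost_subgroups} I need the two variables to lie in a $K$-almost subgroup, which $B'$ is not. The correct reduction, following the structure of the incidence-type applications, is instead to fix a pair $a_1, a_2 \in A_j$ and count $b \in B$ with $b \in (a_1 - B') \cap (a_2 - B')$... this still does not directly produce a linear equation in $A \times A$.

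The honest route is via Theorem \ref{thm:STinverse}: take $P = A \times A$ and build a family of lines from the incidences implicit in the additive equation $b + b' \in A$. Concretely, for each $b' \in B'$ and each pair, the condition $b + b' = a \in A$ together with a second such condition gives a point of $A \times A$ on a line determined by $b, b'$; more precisely one sets up lines $\ell_{b, b'}$ in the plane so that $(a, a') \in A \times A$ is incident to $\ell_{b,b'}$ exactly when $a - b = a' - b' $ or a similar affine relation, so that $S^2 / |A|$-type counting of coincidences becomes an incidence count $I(A \times A, L)$ with $|L| \lesssim |B| |B'|$, and Theorem \ref{thm:STinverse} bounds this by $3|A|^2 + |A|^\gamma K^C |B||B'|$. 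Combining with the lower bound $S^2/(|B'|) \lesssim$ (number of coincidences) via Cauchy–Schwarz on the $b'$-fibers, and symmetrically on the $b$-fibers, yields $S \lesssim |A|^\gamma K^C(|B|^{1/2}|B'| + |B|)$ after rearrangement, and the symmetric min comes from running the argument with the roles of $B, B'$ swapped. For \eqref{eq:basis_bound}: if $A \subset B + B$ then $S = |A|$ with $B = B'$, so $|A| \le 2|A|^\gamma K^C(|B|^{1/2}|B| + |B|) \le 4 |A|^\gamma K^C |B|^{3/2}$, whence $|B| \ge (|A|^{1-\gamma}K^{-C}/4)^{2/3} \ge |A|^{2/3 - \gamma'}K^{-C'}$ after adjusting constants. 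The main obstacle I expect is the clean construction of the line family $L$ realizing the coincidence count as a genuine incidence count on the direct product $A \times A$ with the right bound $|L| \lesssim |B||B'|$ — getting the affine parametrization right so that no degenerate or over-counted lines appear, and handling the diagonal/trivial contributions (the $3|P|$ term) so they do not dominate; the dyadic bookkeeping and the two Cauchy–Schwarz steps are routine by comparison.
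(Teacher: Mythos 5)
There is a genuine gap, and it sits exactly where you flagged uncertainty. You correctly observed that applying Cauchy--Schwarz to the fibers over $a\in A$ (i.e.\ to $\sum_a r(a)^2$) leads to counting, for fixed $b_1\neq b_2\in B$, elements $a$ with $a-b_1,a-b_2\in B'$, which produces a linear relation between two elements of $B'$ --- a set with no multiplicative structure --- so Theorem \ref{thm:BS_almost_subgroups} does not apply. But instead of abandoning that route, the fix is to apply Cauchy--Schwarz in the \emph{other} direction: with $T_{b'}:=\{b\in B: b+b'\in A\}$ and $S_b:=\{b'\in B': b+b'\in A\}$, one has $S^2\leq |B'|\sum_{b'}|T_{b'}|^2$ and $\sum_{b'}|T_{b'}|^2=\sum_b|S_b|+\sum_{b_1\neq b_2}|S_{b_1}\cap S_{b_2}|$. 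Now each $b'\in S_{b_1}\cap S_{b_2}$ gives a pair $(a,a')=(b_1+b',b_2+b')\in A\times A$ with $a-a'=b_1-b_2\neq 0$, and \emph{both} variables lie in the $K$-almost subgroup $A$, so Theorem \ref{thm:BS_almost_subgroups} gives $|S_{b_1}\cap S_{b_2}|\leq |A|^\gamma K^C$; summing over the at most $|B|^2$ pairs and rearranging (after the harmless reduction $S\geq 2|B'|$) yields $S\lesssim |A|^{\gamma/2}K^{C/2}|B||B'|^{1/2}$, which is the paper's proof. Your dyadic pigeonholing is then unnecessary.

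The fallback you propose via Theorem \ref{thm:STinverse} does not work as described, for two concrete reasons. First, the coincidence count $\sum_{b'}|T_{b'}|^2$ is a \emph{multiplicity-weighted} line count: the natural line attached to a triple $(b_1,b_2,b')$ is $y=x+(b_2-b_1)$, which has slope $1$ and depends only on the difference $b_2-b_1$, so distinct pairs $(b_1,b_2)$ (and hence distinct triples) can produce the same point--line incidence; $I(A\times A,L)$ with $L$ a set of distinct lines therefore does not bound the coincidence count, and there is no affine reparametrization that makes the line depend on more than $b_2-b_1$, since the constraint on $(a,a')\in A\times A$ is exactly $a-a'=b_1-b_2$. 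Second, even granting an incidence formulation, the additive term $3|P|=3|A|^2$ in Theorem \ref{thm:STinverse} is fatal in precisely the regime relevant for \eqref{eq:basis_bound}: with $B=B'$, $S=|A|$ and $|B|\approx|A|^{2/3}$, the resulting inequality $S^2/|B'|\lesssim S+3|A|^2+|A|^\gamma K^C|B|^2$ gives only $|A|\lesssim |B|^{1/2}|A|$, i.e.\ nothing. What is needed is the per-line $\ell_\infty$ bound, which is Theorem \ref{thm:BS_almost_subgroups} itself (indeed Theorem \ref{thm:STinverse} is deduced from it line by line), so the detour collapses back to the direct argument above. Your final optimization step deducing \eqref{eq:basis_bound} from the stated bound on $S$ is fine.
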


The proof of Theorem \ref{thm:additivebasis} is given in Section \ref{sec:applications}.
\begin{Remark}
 During the preparation of the manuscript we became aware that Cosmin Pohoata has independently proved Theorem~\ref{thm:additivebasis} using an earlier result of Chang and by a somewhat different method.
\end{Remark}

\subsubsection{Unlimited growth for products of difference sets} It was conjectured in \cite{BRNZ} that for any $b \in \mathbb R$ there exists $k=k(b) \in \mathbb N$ such that for all $A \subset \mathbb R$
\[|(A-A)^k| \geq |A|^b .\]
In another application of Theorem \ref{thm:mainmain}, we give a positive answer to this question under the additional restriction that $A \subset \mathbb Q$. In fact, we prove the following stronger statement.


\begin{Theorem} \label{thm:proddiff}
For any $b \in \mathbb R$ there exists $k=k(b) \in \mathbb N$ such that for all $A \subset \mathbb Q$ and $B \subset \mathbb Q$ with $|B| \geq 2$,
\[|(A+B)^k| \geq |A|^b .\]

\end{Theorem}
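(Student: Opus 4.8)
The plan is to reduce the statement about $(A+B)^{(k)}$ to the shifted iterated product set result, Theorem \ref{thm:mainmain}, by pigeonholing on a popular difference inside $B$. First, fix $b_0 \in B$ and set $u = b_0 - b_1$ for some other element $b_1 \in B$ (possible since $|B| \geq 2$); then $A + b_0$ and $A + b_1 = (A+b_0) - u$ are both subsets of $A + B$, up to translation. More precisely, writing $A' := A + b_0$, we have $A' \subseteq A+B$ and $A' - u \subseteq A + B$, where $u \neq 0$. Since the product set of a subset is a subset of the product set, $(A+B)^{(k)} \supseteq (A')^{(k)}$ and $(A+B)^{(k)} \supseteq (A' - u)^{(k)} = ((A'-u))^{(k)}$. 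Now apply Theorem \ref{thm:mainmain} to the set $A'$ (which has $|A'| = |A|$) with the nonzero shift $-u$: for any $b \in \mathbb{Z}$ there is $k = k(b)$ with
\[
\max\{ |(A')^{(k)}|, |(A' - u)^{(k)}| \} \geq |A'|^{b} = |A|^{b}.
\]
Both quantities on the left are bounded above by $|(A+B)^{(k)}|$, so $|(A+B)^{(k)}| \geq |A|^{b}$, which is exactly the claim for integer $b$; for general real $b$ one simply takes $k = k(\lceil b \rceil)$.

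Wait — one must be slightly careful about whether $A' = A + b_0$ could contain $0$, since Theorem \ref{thm:mainmain} as stated allows any finite $A \subset \mathbb{Q}$, so this is not actually an obstruction; the shift $u$ just needs to be nonzero, which it is. There is, however, a genuine subtlety if one wants the cleanest statement: the translates $A + b$ for $b \in B$ all lie inside $A+B$, and any two distinct shifts $b, b' \in B$ give $A + b$ and $(A+b) - (b-b')$ both inside $A+B$ with $b - b' \neq 0$. So the reduction is robust and requires only $|B| \geq 2$.

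The step I expect to require the most care is simply the bookkeeping: making sure the value of $k$ extracted from Theorem \ref{thm:mainmain} depends only on $b$ (equivalently on $\lceil b \rceil$) and not on $A$, $B$, or $u$, which is exactly the content of that theorem, and checking that passing to a translate $A + b_0$ does not change the cardinality or interfere with rationality. In fact the whole argument is essentially a one-line deduction once Theorem \ref{thm:mainmain} is in hand; the real work was in proving Theorem \ref{thm:mainmain} itself. I would present the proof in Section \ref{sec:applications} as a short corollary, noting explicitly that no structure of $B$ beyond having at least two elements is used, so in particular the special case $B = -A$ recovers the conjectured unlimited growth $|(A-A)^{(k)}| \geq |A|^b$ for $A \subset \mathbb{Q}$.
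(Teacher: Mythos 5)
Your proof is correct and is essentially identical to the paper's own one-line argument: pick two distinct $b_1,b_2\in B$, observe that both $(A+b_1)^{(k)}$ and $((A+b_1)+(b_2-b_1))^{(k)}$ lie inside $(A+B)^{(k)}$, and apply Theorem \ref{thm:mainmain} with the nonzero shift $u=b_2-b_1$. The only minor wrinkle — that Theorem \ref{thm:mainmain} is stated for $b\in\mathbb{Z}$ while Theorem \ref{thm:proddiff} allows $b\in\mathbb{R}$ — you handle correctly by rounding up, and the paper leaves implicit.
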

The proof is given in Section \ref{sec:applications}.

\subsection{Asymptotic notation}

Throughout the paper, the standard notation
$\ll,\gg$ is applied to positive quantities in the usual way. Saying $X\gg Y$ or $Y \ll X$ means that $X\geq cY$, for some absolute constant $c>0$.  The expression $X \approx Y$ means that both $X \gg Y$ and $X \ll Y$ hold.

\subsection{The structure of the rest of this paper}

In section \ref{sec:energy}, we introduce a new kind of mixed energy, and establish some initial bounds on this energy which are strong when the set $A$ is defined by relatively few primes ($c\log |A|$ for a sufficiently small constant $c$). The structure of these arguments are similar to those introduced by Chang in \cite{C}, and also used by the authors in \cite{HRNZ}. 

The goal of section \ref{sec:lambda} is to prove the main technical result of the paper, Theorem \ref{thm:lambda}.  The statement uses the language of $\Lambda$-constants, which is a robust generalisation of additive energy, and so we must first define what these constants are and identify some of their crucial properties. We also introduce the notion of query complexity, which is nicely tuned in to the techniques used and results established in Section \ref{sec:energy}. An essential tool in converting the bounds from Section \ref{sec:energy} into strong bounds for $\Lambda$-constants is a deep new result of P\"{a}vl\"{o}lgyi and Zhelezov \cite{PZ}.

In section \ref{sec:conclusion}, we use Theorem \ref{thm:lambda} to conclude the proofs of the main results of this paper, Theorems \ref{thm:mainmain}, \ref{thm:main1} and \ref{thm:BS_almost_subgroups}. Finally, in Section \ref{sec:applications}, we give proofs of further applications of our main results.

\section{A Chang-type bound for the mixed energy} \label{sec:energy}

Different kinds of energies play a pivotal role in the work of Chang \cite{C} and Bourgain-Chang \cite{BC}, as well as \cite{HRNZ}. In \cite{C}, it was proved that, for any finite set of rationals $A$ with $|AA| \leq K|A|$, the \textit{k-fold additive energy}, which is defined as the number of solutions to
\begin{equation} \label{changenergy}
a_1 + \cdots + a_k = a_{k+1} + \cdots a_{2k}, \,\,\,\,\,\,\,\, (a_1,\dots,a_{2k}) \in A^{2k}, 
\end{equation}
is at most $(2k^2-k)^{kK}|A|^{k}$. A simple application of the Cauchy-Schwarz inequality then implies that the \textit{$k$-fold sum set} satisfies the bound
\[ |kA| \geq \frac{|A|^k}{(2k^2-k)^{kK}} .\]
Bound \eqref{changenergy} is close to optimal when $K=c \log |A|$, but becomes trivial when $K=|A|^{\eps}$. In \cite{BC}, (a weighted version of) this bound was used as a foundation, and developed considerably courtesy of some intricate decoupling arguments, in order to prove a bound for the $k$-fold additive energy which remains very strong when $K$ is of the order $|A|^{\eps}$.

In \cite{HRNZ}, we followed a similarly strategy to that of \cite{C}, proving that for any finite set of rationals $A$ with $|AA| \leq K|A|$ and any non-zero rational $u$, the \textit{k-fold multiplicative energy} of $A+u$, which is defined as the number of solutions to
\begin{equation}
(a_1 + u) \cdots  (a_k+u) = (a_{k+1} + u) \cdots (a_{2k} +u ), \,\,\,\,\,\,\,\, (a_1,\dots,a_{2k}) \in A^{2k}, 
\label{changprequel}
\end{equation}
is at most $(Ck^2)^{kK}|A|^{k}$. Unfortunately, in adapting the approach of \cite{C} in order to bound the number of solutions to \eqref{changprequel} in \cite{HRNZ}, we encountered some difficulties with dilation invariance which made the argument rather more complicated, and we were unable to marry our methods with those of \cite{BC} to obtain a strong bound when $K$ is of order $|A|^{\eps}$.

In this paper, we modify the approach of \cite{HRNZ} by working with a different form of energy. Consider the following representation function:
\[r_k(x,y)=|\{(a_1,\ldots,a_k)\in A^k  :a_1\cdots a_k=x,\ (a_1+u)\cdots(a_k+u)=y\}|.\]
Then, because $r_k$ is supported on $A^{(k)}\times (A+u)^{(k)}$, it follows from the Cauchy-Schwarz inequality that
\begin{equation}
|A|^{2k}=\left(\sum_{(x,y)\in A^{(k)}\times (A+u)^{(k)} }r_k(x,y)\right)^2\leq |A^{(k)}||(A+u)^{(k)}|\sum_{(x,y)\in A^{(k)}\times (A+u)^{(k)}} r_k(x,y)^2.
\label{CSbasic}
\end{equation}
The latter sum is the quantity
\[\tilde E_k(A;u):=\left|\left\{(a_1,\ldots,a_k,b_1,\ldots,b_k)\in A^{2k} :\prod_{i=1}^ka_i=\prod_{i=1}^kb_i,\ \prod_{i=1}^k(a_i+u)=\prod_{i=1}^k(b_i+u)\right\}\right|.\]

We summarise this in the following lemma.
\begin{Lemma} \label{lem:CSbasic}
For any finite set $A\subset \mathbb R$, any  $u \in \mathbb R \setminus \{0\}$ and any integer $k \geq 2$, we have
\[|A|^{2k} \leq |A^{(k)}||(A+u)^{(k)}| \tilde E_k(A;u) .\]
In particular,
\[\frac{|A|^k}{\tilde E_k(A;u)^{1/2}}\leq \max\{|A^{(k)}|,|(A+u)^{(k)}|\}.\]
\end{Lemma}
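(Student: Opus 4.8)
The plan is to run the standard Cauchy--Schwarz dispersion argument on the joint representation function $r_k$, exactly as set up in the discussion preceding the lemma. First I would record that $r_k$ is supported on $A^{(k)}\times(A+u)^{(k)}$: whenever $r_k(x,y)>0$ there is a witnessing tuple $(a_1,\dots,a_k)\in A^k$ with $x=a_1\cdots a_k\in A^{(k)}$ and $y=(a_1+u)\cdots(a_k+u)\in(A+u)^{(k)}$. Second, every tuple $(a_1,\dots,a_k)\in A^k$ determines exactly one pair $\bigl(\prod_i a_i,\prod_i(a_i+u)\bigr)$, and conversely $r_k(x,y)$ counts precisely the tuples landing on $(x,y)$, so summing $r_k$ over all of $A^{(k)}\times(A+u)^{(k)}$ gives $\sum_{(x,y)} r_k(x,y)=|A|^k$ with no over- or undercounting.

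The second step is to apply the Cauchy--Schwarz inequality to this sum over the support:
\[
|A|^{2k}=\left(\sum_{(x,y)\in A^{(k)}\times(A+u)^{(k)}} r_k(x,y)\right)^{2}\le |A^{(k)}|\,|(A+u)^{(k)}|\sum_{(x,y)} r_k(x,y)^{2},
\]
which is precisely \eqref{CSbasic}. The final step is to identify the second moment $\sum_{(x,y)}r_k(x,y)^2$ with the mixed energy: expanding the square and summing over $(x,y)$ counts ordered pairs of tuples $\bigl((a_i)_{i=1}^{k},(b_i)_{i=1}^{k}\bigr)\in A^{2k}$ that are sent to a common pair, i.e.\ that satisfy $\prod_i a_i=\prod_i b_i$ and $\prod_i(a_i+u)=\prod_i(b_i+u)$ — which is exactly the definition of $\tilde E_k(A;u)$. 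Substituting this gives the first displayed inequality of the lemma; the ``in particular'' clause then follows from $|A^{(k)}|\,|(A+u)^{(k)}|\le\max\{|A^{(k)}|,|(A+u)^{(k)}|\}^{2}$ by dividing through and taking square roots.

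I do not expect any genuine obstacle here: the argument is purely formal. The only points worth double-checking are the exact mass identity $\sum_{(x,y)} r_k(x,y)=|A|^k$, so that passing from a tuple to the pair it determines neither loses nor repeats a tuple, and the routine bookkeeping in expanding $\sum r_k^2$ into the energy count. Note that the hypothesis $u\neq 0$ is not actually used in this lemma — it is imposed only because $u=0$ would make the shifted product degenerate — and the statement holds verbatim with $\mathbb R$ replaced by any integral domain.
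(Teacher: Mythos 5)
Your argument is exactly the paper's: record that $r_k$ is supported on $A^{(k)}\times(A+u)^{(k)}$ with total mass $|A|^k$, apply Cauchy--Schwarz to get \eqref{CSbasic}, identify the second moment with $\tilde E_k(A;u)$, and deduce the ``in particular'' clause by dividing and taking square roots. This matches the proof in the text line for line, so there is nothing further to add.
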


Our goal is to estimate this energy and to show that, at least for sets of rationals, it cannot ever be too big.

In this section we seek to give an initial upper bound for $\tilde E_k(A;u)$. The strategy is close to that of Chang \cite{C}. There are also clear similarities with the prequel to this paper \cite{HRNZ}.

To do this, as in \cite{HRNZ}, we will write $\tilde E_k(A;u)$ in terms of Dirichlet polynomials. In this case, our Dirichlet polynomials will be functions of the form \[F(s_1,s_2)=\sum_{(a,b)\in\QQ^2}\frac{f(a,b)}{a^{s_1}b^{s_2}}\] where $f:\QQ^2\to\CC$ is some function of finite support. It will also be more convenient to count weighted energy. For $w_a$ a sequence of non-negative weights on $A$, let
\[\tilde E_{k,w}(A;u)=\sum_{\substack{a_1\cdots a_k=b_1\cdots b_k\\ (a_1+u)\cdots(a_k+u)=(b_1+u)\cdots(b_k+u)}}w_{a_1}\cdots w_{a_k}w_{b_1}\cdots w_{b_k}\]

\begin{Lemma} \label{lem:direnergy}
Let $A$ be a finite set of rational numbers and let $u$ be a non-zero rational number. Then, for any integer $k \geq 2$, we have
\[\tilde E_{k,w}(A;u)=\lim_{T\to \infty}\frac{1}{T^2}
\int_0^T\int_0^T\left|\sum_{a\in A} w_aa^{it_1}(a+u)^{it_2}\right|^{2k}dt_1dt_2.\]
\end{Lemma}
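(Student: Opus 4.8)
The plan is to expand the $2k$-th power of the exponential sum, integrate term by term, and identify the surviving contribution using orthogonality of the functions $t \mapsto \lambda^{it}$ on $[0,T]$ as $T \to \infty$.

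First I would write
\[
\left|\sum_{a\in A} w_a a^{it_1}(a+u)^{it_2}\right|^{2k} = \left(\sum_{a\in A} w_a a^{it_1}(a+u)^{it_2}\right)^{k}\overline{\left(\sum_{b\in A} w_b b^{it_1}(b+u)^{it_2}\right)^{k}},
\]
and multiply out to obtain
\[
\sum_{a_1,\dots,a_k\in A}\;\sum_{b_1,\dots,b_k\in A} w_{a_1}\cdots w_{a_k} w_{b_1}\cdots w_{b_k}\,\left(\frac{a_1\cdots a_k}{b_1\cdots b_k}\right)^{it_1}\left(\frac{(a_1+u)\cdots(a_k+u)}{(b_1+u)\cdots(b_k+u)}\right)^{it_2},
\]
where I use that $a^{it} = e^{it\log a}$ and that all the quantities $a_i, a_i+u, b_i, b_i+u$ are nonzero rationals; in particular if $u<0$ or $a_i<0$ one fixes a branch of the logarithm, and the key point is only that $a^{it}\overline{b^{it}} = (a/b)^{it}$ and $|a^{it}|=1$. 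Then I would divide by $T^2$ and integrate each term over $[0,T]^2$, using Fubini (the sum is finite, so this is unproblematic).

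The heart of the matter is the elementary limit: for a nonzero real $\lambda$,
\[
\lim_{T\to\infty}\frac1T\int_0^T \lambda^{it}\,dt = \lim_{T\to\infty}\frac1T\int_0^T e^{it\log\lambda}\,dt = \begin{cases} 1 & \text{if } \log\lambda = 0,\\ 0 & \text{if } \log\lambda \neq 0,\end{cases}
\]
since when $\log \lambda \neq 0$ the integral is $(\lambda^{iT}-1)/(i\log\lambda)$, which is bounded, hence divided by $T$ it vanishes. Applying this in each variable $t_1, t_2$ separately (the integrand factors as a product of a function of $t_1$ and a function of $t_2$), the term indexed by $(a_1,\dots,a_k,b_1,\dots,b_k)$ contributes $w_{a_1}\cdots w_{b_k}$ if and only if both $a_1\cdots a_k = b_1\cdots b_k$ and $(a_1+u)\cdots(a_k+u) = (b_1+u)\cdots(b_k+u)$, and contributes $0$ otherwise. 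Summing over the surviving tuples gives exactly $\tilde E_{k,w}(A;u)$.

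The only mild subtlety — and the place to be slightly careful rather than a genuine obstacle — is interchanging the (finite) sum with the limit and integral, and handling signs: one should note that since $A \subset \mathbb{Q}$ and $u \in \mathbb{Q}\setminus\{0\}$ are fixed, there are only finitely many ratios $\lambda$ appearing, each a nonzero rational, so $\log\lambda = 0$ exactly when $\lambda = 1$, and the dominated/bounded convergence needed to swap $\lim_{T\to\infty}$ past the finite double sum is immediate. Thus the argument is entirely routine once the expansion is set up; this lemma is the standard Dirichlet-polynomial / Parseval-type identity adapted to the two-variable multiplicative setting, exactly as in the analogous lemma of \cite{HRNZ}.
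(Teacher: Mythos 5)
Your proof is correct and takes essentially the same route as the paper: expand the $2k$-th power into a $2k$-fold sum over $A$, integrate term by term, and apply the one-dimensional mean-value orthogonality $\lim_{T\to\infty}\frac1T\int_0^T(\lambda_1/\lambda_2)^{it}\,dt = \mathbf{1}[\lambda_1=\lambda_2]$ separately in $t_1$ and $t_2$; the finite sum makes the interchange of limit, sum and integral trivial. One small point worth flagging (the paper is equally silent on it): your remark that one ``fixes a branch of the logarithm'' so that $|a^{it}|=1$ when $a<0$ does not quite work as stated, since for any branch $\log a = \log|a| + i\theta$ with $\theta\neq 0$ one gets $|a^{it}| = e^{-t\theta}\neq 1$. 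The intended convention is presumably $a^{it}:=|a|^{it}$, in which case the kernel detects $|\lambda_1|=|\lambda_2|$ rather than $\lambda_1=\lambda_2$; this is harmless under the implicit (and standard in this literature) reduction to positive $A$ and $A+u$, but the statement as literally written for arbitrary signed rationals would need that reduction made explicit. Since the paper adopts the same shorthand, this is a cosmetic remark rather than a gap in your argument.
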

\begin{proof}
Expanding, the double integral on the right hand side is equal to
\begin{multline*}
\sum_{a_1,\ldots,a_k\in A}\sum_{b_1,\ldots,b_k\in A}w_{a_1}\cdots w_{a_k}w_{b_1}\cdots w_{b_k}\cdot\\
\cdot\int_0^T (a_1\cdots a_kb_1^{-1}\cdots b_k^{-1})^{it_1}dt_1\int_0^T((a_1+u)\cdots (a_k+u)(b_1+u)^{-1}\cdots (b_k+u)^{-1})^{it_2}dt_2.\end{multline*}
Now
\[\frac{1}{T}\int_0^T (u/v)^{it}dt=\begin{cases}1&\text{ if }u=v,
\\ O_{u,v}(T^{-1})&\text{ if }u\neq v.\end{cases}\]
From this, the lemma follows.
\end{proof}

Let $\|\cdot \|_{2k}$ be the standard norm in $L^{2k}([0, T]^2)$, normalised such that $\| 1\|_{2k} = 1$. So,
\[
\| f \|_{2k} := \left( \frac{1}{T^2} \int_0^T \int_0^T|f(t)|^{2k} dt \right)^{1/2k}. 
\]

\begin{Lemma} \label{lem:split}
Let $\cJ$ be a set of integers and decompose it as $\cJ=\cJ_1 \cup \cdots \cup\cJ_N$. For each $j \in \cJ$ let $f_j:\RR\times \RR\to \CC$ be a function belonging to $L^{2k}\lr{\RR^2}$ for every integer $k\geq 2$. Then, for every integer $k \geq 2$,
\begin{multline}\label{split}\lim_{T\to \infty}\lr{\frac{1}{T^2}\int_0^T\int_0^T\left|\sum_{j\in\cJ} f_j(t_1,t_2)\right|^{2k}dt_1dt_2}^{1/k} \\\leq N\sum_{n=1}^N \lim_{T\to \infty}\lr{\frac{1}{T^2}\int_0^T\int_0^T\left| \sum_{j\in \cJ_n}f_j(t_1,t_2)\right|^{2k}dt_1dt_2}^{1/k}.
\end{multline}
\end{Lemma}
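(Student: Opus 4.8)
The plan is to reduce the statement to Minkowski's inequality in $L^{2k}$ followed by a single application of the Cauchy--Schwarz inequality. There is no genuine obstacle here; the only point requiring a word of care is the passage to the limit $T\to\infty$, and the only thing to watch is that the $1/k$ powers and the normalisation of $\|\cdot\|_{2k}$ line up correctly.

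First I would rewrite both sides in terms of the ($T$-dependent) norm $\|\cdot\|_{2k}$ on $L^{2k}[0,T]^2$. Since $\bigl(\tfrac{1}{T^2}\int_0^T\int_0^T |f|^{2k}\bigr)^{1/k}=\|f\|_{2k}^2$, setting
\[ g_n:=\sum_{j\in\cJ_n}f_j\qquad(1\le n\le N), \]
so that $\sum_{j\in\cJ}f_j=\sum_{n=1}^N g_n$, the inequality \eqref{split} to be proved is exactly
\[ \lim_{T\to\infty}\Bigl\|\sum_{n=1}^N g_n\Bigr\|_{2k}^2 \;\le\; N\sum_{n=1}^N\lim_{T\to\infty}\|g_n\|_{2k}^2 . \]

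Next, I would fix $T$ and apply the triangle (Minkowski) inequality in $L^{2k}[0,T]^2$ to the finite sum $\sum_{n=1}^N g_n$, giving $\bigl\|\sum_n g_n\bigr\|_{2k}\le\sum_{n=1}^N\|g_n\|_{2k}$. Squaring and then applying Cauchy--Schwarz to the pairing of the vector $(\|g_1\|_{2k},\dots,\|g_N\|_{2k})$ with the all-ones vector yields
\[ \Bigl\|\sum_{n=1}^N g_n\Bigr\|_{2k}^2 \;\le\; \Bigl(\sum_{n=1}^N\|g_n\|_{2k}\Bigr)^2 \;\le\; N\sum_{n=1}^N\|g_n\|_{2k}^2 . \]

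Finally I would let $T\to\infty$. Each $g_n$ is a finite linear combination of the $f_j$, hence lies in $L^{2k}$ for every integer $k\ge2$, and the relevant limits exist (these are precisely the energy-type limits of Lemma~\ref{lem:direnergy}); since the right-hand side is a finite sum, the limit passes through the sum and through the continuous squaring operation, which gives \eqref{split}. The ``hard part'', such as it is, is purely bookkeeping: confirming that the exponents $1/k$, the normalisation $\|1\|_{2k}=1$, and the $\lim_T$ are arranged exactly as above so that the per-$T$ estimate transfers to the limiting statement.
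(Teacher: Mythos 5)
Your argument is exactly the paper's proof: both rewrite in terms of the normalised $L^{2k}[0,T]^2$ norm, apply the triangle (Minkowski) inequality, then Cauchy–Schwarz, and finally let $T\to\infty$. The approach and even the order of the steps coincide, so there is nothing further to add.
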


\begin{proof} 
It suffices to prove the inequality for all sufficiently large $T$, which we assume fixed for now.
Then
\begin{equation} \label{eq:Lknormsum}
\lr{\frac{1}{T^2}\int_0^T\int_0^T\left|\sum_{j\in\cJ} f_j(t_1,t_2)\right|^{2k}dt_1dt_2}^{1/k} =
 \left(\left\| \sum_{n=1}^N\sum_{j \in \cJ_n}  f_j \right\|_{2k} \right)^2\leq\left(\sum_{n=1}^N\left\| \sum_{j \in \cJ_n}  f_j \right\|_{2k} \right)^2,
\end{equation}
by the triangle inequality. By the Cauchy-Schwarz inequality, (\ref{eq:Lknormsum}) is bounded by 
\begin{equation}
N\sum_{n=1}^N\left\| \sum_{j \in \cJ_n}  f_j \right\|_{2k}^2.
\end{equation}
Letting $T \to \infty$ we get the claim of the lemma.
\end{proof}
\begin{Corollary}\label{Split}
Let $A$ be a finite set of rational numbers, partitioned as $A=A_1\cup\cdots\cup A_N$, let $w$ be a set of non-negative weights, and let $u$ be a non-zero rational number. Then for any integer $k \geq 2$
\[\tilde E_{k,w}(A;u)^{1/k}\leq N\sum_{j=1}^N\tilde E_{k,w}(A_j;u)^{1/k}.\]
\end{Corollary}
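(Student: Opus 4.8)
The plan is to obtain this as an immediate combination of Lemma~\ref{lem:direnergy} and Lemma~\ref{lem:split}. First I would, for each $a \in A$, introduce the function $f_a \colon \RR \times \RR \to \CC$ given by $f_a(t_1,t_2) = w_a\, a^{it_1}(a+u)^{it_2}$ (we may clearly assume $0 \notin A$ and $-u \notin A$, since such elements contribute nothing of substance to the energies). Each $f_a$ has constant modulus $w_a < \infty$, so it is bounded and measurable, and its restriction to $[0,T]^2$ lies in $L^{2k}([0,T]^2)$ for every $T$ and every $k \geq 2$; this is all that is genuinely needed for the argument of Lemma~\ref{lem:split}.

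Next I would apply Lemma~\ref{lem:split} with index set $\cJ = A$ and the decomposition $\cJ = A_1 \cup \cdots \cup A_N$ supplied by the given partition, to the family $\{f_a\}_{a \in A}$. This yields
\[
\lim_{T\to \infty}\left(\frac{1}{T^2}\int_0^T\int_0^T\Big|\sum_{a\in A} f_a(t_1,t_2)\Big|^{2k}\,dt_1\,dt_2\right)^{1/k}
\leq N\sum_{n=1}^N \lim_{T\to \infty}\left(\frac{1}{T^2}\int_0^T\int_0^T\Big|\sum_{a\in A_n} f_a(t_1,t_2)\Big|^{2k}\,dt_1\,dt_2\right)^{1/k}.
\]
By Lemma~\ref{lem:direnergy} applied to $A$ with weights $w$, the left-hand side equals $\tilde E_{k,w}(A;u)^{1/k}$; by Lemma~\ref{lem:direnergy} applied to each $A_n$ with the restricted weights, the $n$-th term on the right equals $\tilde E_{k,w}(A_n;u)^{1/k}$. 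Substituting gives exactly the claimed inequality.

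\textbf{Expected main obstacle.} There is essentially no conceptual difficulty here; the only point meriting care is that Lemma~\ref{lem:split} is phrased with the hypothesis $f_j \in L^{2k}(\RR^2)$, whereas our $f_a$ have constant modulus and hence are not globally $2k$-integrable. However, the proof of Lemma~\ref{lem:split} works at a fixed large $T$ (triangle inequality, then Cauchy--Schwarz) and only lets $T \to \infty$ at the end, so boundedness of the $f_a$ — which guarantees finiteness of every integral over $[0,T]^2$ — suffices, and the argument goes through verbatim. I would include a one-line remark to this effect and otherwise treat the corollary as a direct consequence of the two preceding lemmas.
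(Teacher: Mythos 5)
Your proof is correct and matches the intended derivation in the paper, which presents the corollary as an immediate consequence of Lemmas~\ref{lem:direnergy} and~\ref{lem:split} with no further argument. You are also right that the $L^{2k}(\RR^2)$ hypothesis in Lemma~\ref{lem:split} is stated more strongly than needed here --- the constant-modulus functions $f_a$ lie only in $L^{2k}([0,T]^2)$ for each fixed $T$, not in $L^{2k}(\RR^2)$ --- and your observation that the proof of Lemma~\ref{lem:split} only ever integrates over $[0,T]^2$ before letting $T\to\infty$ correctly disposes of this point.
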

Now let $p$ be a fixed prime. For $a \in \mathbb Q$, let $v_p(a)$ denote the $p$-adic valuation of $a$. For a set $A$ of rational numbers and an integer $t$, we let 
$A_t=\{a\in A:v_p(a)=t\}$.

\begin{Lemma} \label{thm:basecase}
Let $p$ be a prime number. Suppose $A$ is a finite set of rational numbers and let $u$ be a non-zero rational number.
Then for any $w$, a set of non-negative weights on $A$, and any integer $k \geq 2$,
\[\tilde E_{k,w}(A;u)^{1/k}\leq 2\binom{2k}{2}\sum_{d\in \ZZ}\tilde E_{k,w}(A_d;u)^{1/k}.\]
\end{Lemma}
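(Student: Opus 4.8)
The plan is to reduce the mixed energy over all of $A$ to the mixed energy over the pieces $A_d$ on which the $p$-adic valuation is constant, using Corollary \ref{Split} together with a pigeonholing argument on $p$-adic valuations. First I would observe that the valuation identity is the right invariant to track: if $a_1 \cdots a_k = b_1 \cdots b_k$, then applying $v_p$ gives $v_p(a_1) + \dots + v_p(a_k) = v_p(b_1) + \dots + v_p(b_k)$. Thus in any solution counted by $\tilde E_{k,w}(A;u)$, the multiset of valuations on the $a$-side and the $b$-side have equal sum. Rather than working with the full partition of $A$ into level sets $A_d$ directly (which has unboundedly many parts, so Corollary \ref{Split} would be useless), the idea is to group the pieces $A_d$ cleverly into a \emph{bounded} number of classes so that Corollary \ref{Split} can be applied with $N$ equal to an absolute constant times a binomial coefficient.

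The key step is the following bounded colouring trick. For a parameter to be chosen, partition $\ZZ$ into finitely many residue-type classes so that within a single solution all the relevant valuations are forced to be equal. Concretely: a solution to $\prod a_i = \prod b_i$ with each $a_i, b_i$ lying in level sets, say $a_i \in A_{d_i}$ and $b_i \in A_{e_i}$, satisfies $\sum d_i = \sum e_i$. If we could further show that in the "extremal" contributions all the $d_i$ and $e_i$ coincide, we would be reduced to the diagonal sum $\sum_d \tilde E_{k,w}(A_d;u)$. The honest way to get the factor $2\binom{2k}{2}$: apply Corollary \ref{Split} after partitioning $A$ according to valuation into $2\binom{2k}{2}$ classes built from an interval/colouring argument — the number $2\binom{2k}{2} = 2k(2k-1)$ is exactly the count one gets from choosing an ordered pair among the $2k$ variables $a_1,\dots,a_k,b_1,\dots,b_k$, reflecting that in any non-diagonal solution two of these $2k$ elements must lie in distinct but "adjacent" level sets. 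I would set up the partition $A = A^{(1)} \cup \dots \cup A^{(N)}$ with $N = 2\binom{2k}{2}$ by colouring each integer $t$ with a colour in $\{1,\dots,N\}$ chosen so that any admissible tuple of valuations summing consistently on both sides, with not all equal, is \emph{excluded} from lying entirely in one colour class; then the only surviving monochromatic solutions are those with all valuations equal, i.e. all $a_i, b_i$ in a common $A_d$, giving $\sum_d \tilde E_{k,w}(A_d;u)$ as the monochromatic contribution. Corollary \ref{Split} then yields $\tilde E_{k,w}(A;u)^{1/k} \le N \sum_n \tilde E_{k,w}(A^{(n)};u)^{1/k}$, and bounding each $\tilde E_{k,w}(A^{(n)};u)^{1/k} \le \sum_{d} \tilde E_{k,w}(A_d;u)^{1/k}$ (since $A^{(n)}$ is a disjoint union of level sets and mixed energy is monotone / sub-additive across disjoint unions of level sets within a single colour class — here one needs that a solution inside $A^{(n)}$ with mixed level sets forces equal valuations, so it splits as a sum over individual $A_d$) finishes the estimate.

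The main obstacle I anticipate is making precise the claim that a multiplicative solution confined to a single colour class forces all $p$-adic valuations to be equal, and getting exactly the constant $2\binom{2k}{2}$ rather than something weaker. This requires the right choice of colouring of $\ZZ$: one natural attempt is to use an arithmetic-progression colouring where $t \mapsto t \bmod M$ for suitable $M$, combined with a coarse "block" coordinate, so that within a class the valuations are both congruent mod $M$ and in the same block, which pins them down once we know their sum is balanced and the spread is controlled by the number of factors. The bookkeeping of how the constraint $\sum d_i = \sum e_i$ interacts with "all valuations in one class" to force $d_1 = \dots = d_k = e_1 = \dots = e_k$ is the delicate part, and the factor $2\binom{2k}{2}$ should drop out of counting the number of classes needed to separate any two of the $2k$ variables whose valuations could a priori differ. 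An alternative, possibly cleaner, route is to apply Corollary \ref{Split} iteratively: first split off one valuation coordinate at a time, each split costing a bounded factor, and multiply up; but controlling the product of these factors to land exactly at $2\binom{2k}{2}$ again needs care. I would therefore focus the write-up on pinning down the colouring and verifying the monochromatic-solutions-are-diagonal claim; everything after that is a routine application of Corollary \ref{Split} and sub-additivity.
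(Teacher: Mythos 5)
There is a genuine gap: the central claim of your colouring scheme --- that one can partition $\ZZ$ into $2\binom{2k}{2}$ classes so that every solution whose valuations all lie in one class has all valuations equal --- is false, and no finite colouring can achieve it. The obstruction is elementary: $\tilde E_{k,w}$ always counts the ``permutation'' solutions; for instance, for $k=2$ the tuple $(a_1,a_2,a_1,a_2)$ with $v_p(a_1)\neq v_p(a_2)$ solves both equations, so any colour class containing two distinct valuations already contains monochromatic solutions whose valuations are not all equal. These contribute to $\tilde E_{k,w}(A^{(n)};u)$ but are invisible to $\sum_d \tilde E_{k,w}(A_d;u)$, so your final sub-additivity step fails. (Even setting aside permutation solutions, a finite colouring cannot make the constraint $\sum d_i=\sum e_i$ force equality: by van der Waerden some class contains a three-term progression $d,\ d+r,\ d+2r$, and $d+(d+2r)=2(d+r)$.) If instead each class contains a single valuation, the number of classes is the number of distinct valuations occurring in $A$, which is unbounded, and Corollary \ref{Split} then only gives the useless factor $N$; the same issue defeats your alternative of iterating Corollary \ref{Split} one valuation at a time, since the accumulated factor is exponential in the number of distinct valuations.

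The mechanism in the paper is weaker and is exploited analytically rather than combinatorially. One first splits $A=A_+\cup A_-$ according to whether $v_p(a)\geq v_p(u)$ or $v_p(a)<v_p(u)$; this is where the factor $2$ comes from, via Corollary \ref{Split} with $N=2$, and note that your proposal never uses $u$, whereas the splitting point $v_p(u)$ is essential. On $A_+$ one expands $\tilde E_{k,w}(A_+;u)$ via Lemma \ref{lem:direnergy} as a sum over $2k$-tuples of valuations $(d_1,\dots,d_{2k})$ and observes only that the $d_i$ cannot be pairwise distinct: writing the shifted equation as a product of factors $a_iu^{-1}+1$ with $v_p(a_iu^{-1})\geq 0$, a unique minimal valuation would make the two sides divisible by different powers of $p$. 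Thus all one can force is that \emph{some two} of the $2k$ valuations coincide --- far from all being equal --- and the factor $\binom{2k}{2}$ counts the choice of the coinciding pair. Each such term is then bounded in $L^{2k}$, via Cauchy--Schwarz and H\"older, by $\sum_d \tilde E_{k,w}(A_d;u)^{1/k}\,\tilde E_{k,w}(A_+;u)^{1-1/k}$, and dividing by $\tilde E_{k,w}(A_+;u)^{1-1/k}$ gives the bound for $A_+$; the set $A_-$ is treated symmetrically using the factors $1+ua_i^{-1}$ together with the unshifted product equation. So the missing ingredients in your proposal are the split at $v_p(u)$, the realisation that only a coincidence of two valuations (not a diagonal structure) can be forced, and the H\"older bootstrapping that converts this pairwise coincidence into the stated inequality.
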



\begin{proof}
First, let $A=A_+\cup A_-$ where $A_+=\{a\in A:v_p(a)\geq v_p(u)\}$ and $A_-=\{a\in A:v_p(a)<v_p(u)\}$. By Corollary \ref{Split}, we have
\begin{equation}
\tilde E_{k,w}(A;u)^{1/k}\leq 2\tilde E_{k,w}(A_+;u)^{1/k}+2\tilde E_{k,w}(A_-;u)^{1/k}.
\label{+-split}
\end{equation}
These two terms will be dealt with in turn, starting with $E_{k,w}(A_+;u)^{1/k}$. 
To do this, we first set up some more notation. For an integer $d$, define the function
\[f_d(t_1,t_2):= \sum_{a \in A_d} w_a a^{it_1} (a+u)^{it_2} .\]
Then, by Lemma \ref{lem:direnergy}
\[\tilde E_{k,w}(A_+;u) = \lim_{T \rightarrow \infty} \frac{1}{T^2} \int_0^T \int_0^T \left|\sum_{d \geq v_p(u)} f_d(t_1,t_2)\right|^{2k} dt_1 dt_2 .\]
Expanding this expression, as in the proof of Lemma \ref{lem:direnergy}, we obtain that $\tilde E_{k,w}(A_+;u)$ is equal to
\begin{equation}
\sum_{d_1,\dots,d_{2k} \geq v_p(u)}\lim_{T \rightarrow \infty} \frac{1}{T^2} \int_0^T \int_0^T  f_{d_1}(t_1,t_2)\cdots f_{d_k}(t_1,t_2) \bar{f_{d_{k+1}}(t_1,t_2)}\cdots \bar{f_{d_{2k}}(t_1,t_2)} dt_1 dt_2 .
\label{eq:long}
\end{equation}

For fixed $d_1, \dots, d_{2k}$, the quantity
\[\lim_{T \rightarrow \infty} \frac{1}{T^2} \int_0^T \int_0^T  f_{d_1}(t_1,t_2)\cdots f_{d_k}(t_1,t_2) \bar{f_{d_{k+1}}(t_1,t_2)}\cdots \bar{f_{d_{2k}}(t_1,t_2)} dt_1 dt_2 .\]
gives a weighted count of the number of solutions to the system of simultaneous equations
\begin{align}
a_1 \cdots a_k &= a_{k+1} \cdots a_{2k} \label{basiceq1}
\\ (a_1+u) \cdots (a_k+u) &= (a_{k+1}+u) \cdots (a_{2k}+u),
\label{basiceq2}
\end{align}
such that $a_i \in A_{d_i}$.

We claim that there are no solutions to \eqref{basiceq2}, and thus also no solutions to the above system, if all of the $d_i$ are distinct. Indeed, suppose we have a solution
\[(a_1+u)\cdots(a_k+u)=(a_{k+1}+u)\cdots(a_{2k}+u)\]
and so
\begin{equation}
(a_1u^{-1}+1)\cdots(a_ku^{-1}+1)=(b_{k+1}u^{-1}+1)\cdots(b_{2k}u^{-1}+1).
\label{eq:normal}
\end{equation}

Since $v_p(a_iu^{-1})\geq 0$, expanding out both sides of \eqref{eq:normal} and simplifying gives
\begin{equation}
u^{-1}(a_1+\cdots+a_k)+\text{higher terms}=u^{-1}(b_{k+1}+\cdots+b_{2k})+\text{higher terms}.
\label{eq:contradiction}
\end{equation} 
If all of the $d_i$ are distinct, then there is some unique smallest $d_i$, and thus a unique smallest value of $v_p(a_i)$. But then the left hand side and the right hand side are divisible by distinct powers of $p$, a contradiction.

So returning to \eqref{eq:long}, we need only consider the cases in which one or more of the $d_i$ are repeated. There are three kinds of ways in which this can happen.

\begin{enumerate}
\item $d_i=d_i'$ with $1 \leq i \leq k$ and $k+1 \leq i' \leq 2k$. There are $k^2$ possible positions for such a pair $(i,i')$,
\item $d_i=d_i'$ with $1 \leq i, i' \leq k$. There are $\binom{k}{2}$ possible positions for such a pair $(i,i')$,
\item $d_i=d_i'$ with $k+1 \leq i, i' \leq 2k$. There are $\binom{k}{2}$ possible positions for such a pair $(i,i')$.
\end{enumerate}

Suppose we are in situation (1) above. Specifically, suppose that $d_1=d_{2k}$. The other $k^2-1$ cases can be dealt with by the same argument. Then these terms in \eqref{eq:long} can be rewritten as

\begin{multline}
\sum_{d_1 \geq v_p(u)}\lim_{T \rightarrow \infty} \frac{1}{T^2} \int_0^T \int_0^T  f_{d_1}(t_1,t_2)  \bar{f_{d_1}(t_1,t_2)} 
\\ \sum_{d_2,\dots,d_{2k-1} \geq v_p(u)} f_{d_2}(t_1,t_2) \cdots f_{d_k}(t_1,t_2) \bar{f_{d_{k+1}}(t_1,t_2)}\cdots \bar{f_{d_{2k-1}}(t_1,t_2)} dt_1 dt_2 
\\= \sum_{d \geq v_p(u)}\lim_{T \rightarrow \infty} \frac{1}{T^2} \int_0^T \int_0^T  |f_{d}(t_1,t_2)   |^2 \left |\sum_{d \geq v_p(u)} f_d(t_1,t_2)\right |^{2(k-1)} dt_1 dt_2.
\end{multline}

Suppose we are in situation (2). Specifically, suppose that $d_1=d_{2}$. The other $ \binom{k}{2}-1$ cases can be dealt with by the same argument. Then these terms in \eqref{eq:long} can be rewritten as

\begin{align*} & \sum_{d_1 \geq v_p(u)}  \lim_{T\to \infty} \frac{1}{T^2} \int_0^T \int_0^T  f_{d_1}^2(t_1,t_2)  \sum_{d_3,\dots, d_{2k} \geq v_p(u)} f_{d_3}(t_1,t_2) \cdots f_{d_k}(t_1,t_2)  \bar{f_{d_{k+1}}(t_1,t_2)} \cdots \bar{f_{d_{2k}}(t_1,t_2)} dt_1 dt_2 
\\& \leq  \sum_{d \geq v_p(u)} \lim_{T\to \infty}\frac{1}{T^2}\int_0^T\int_0^T   |f_d(t_1,t_2)|^2\left|\sum_{d \geq v_p(u)} f_d(t_1,t_2)\right|^{k-2} \left|\sum_{d} \bar{f_d(t_1,t_2)}\right|^{k}  dt_1dt_2
\\&= \sum_{d \geq v_p(u)} \lim_{T\to \infty}\frac{1}{T^2}\int_0^T \int_0^T   |f_d(t_1,t_2)|^2\left|\sum_{d \geq v_p(u)} f_d(t_1,t_2)\right|^{2(k-1)}   dt_1dt_2.
\end{align*}

The same argument also works in case (3). Returning to \eqref{eq:long}, we then have
\begin{align*}
\tilde E_{k,w}(A_+;u)& \leq 
 \binom{2k}{2}\sum_{d \geq v_p(u)} \lim_{T\to \infty}\frac{1}{T^2}\int_0^T \int_0^T   |f_d(t_1,t_2)|^2\left|\sum_{d \geq v_p(u)} f_d(t_1,t_2)\right|^{2(k-1)}   dt_1dt_2\\
&\leq \binom{2k}{2}\sum_{d\geq v_p(u)}\tilde E_{k,w}(A_d;u)^{1/k}E_{k,w}(A_+;u)^{1-1/k},
\end{align*}
the last inequality being H\"older's. It therefore follows that
\begin{equation}
\tilde E_{k,w}(A_+;u)^{1/k} \leq \binom{2k}{2} \sum_{d\geq v_p(u)}\tilde E_{k,w}(A_d;u)^{1/k}  .
\label{plusbound}
\end{equation}
Now we proceed to $E_{k,w}(A_-;u)^{1/k}$. For any solution to the pair of equations
\begin{align*}
a_1\cdots a_k&=a_{k+1}\cdots a_{2k}\\ (a_1+u)\cdots (a_k+u)&=(a_{k+1}+u)\cdots (a_{2k}+u)
\end{align*}
we have a solution to the equation
\[(1+ua_1^{-1})\cdots (1+ua_k^{-1})=(1+ua_{k+1}^{-1})\cdots (1+ua_{2k}^{-1}).\]
Again, we expand and simplify, using this time that $v_p(ua_i^{-1})$ is positive, and get
\[u(a_1^{-1}+\cdots a_k^{-1})+\text{higher terms}=u(a_{k+1}^{-1}+\cdots a_{2k}^{-1})+\text{higher terms}.\]
As in the previous case
\footnote{Note that here we have used the information that $a_1 \cdots a_k=a_{k+1} \cdots a_{2k}$, whereas we did not use this when bounding $\tilde E_{k,w}(A_+;u)$.}
, we cannot have a unique smallest $v_p(ua_i^{-1})$. We can therefore repeat the arguments that gave us \eqref{plusbound} in order to deduce that
\begin{equation}
\tilde E_{k,w}(A_-;u)^{1/k} \leq \binom{2k}{2} \sum_{d < v_p(u)}\tilde E_{k,w}(A_d;u)^{1/k}  .
\label{minusbound}
\end{equation}
Inserting \eqref{plusbound} and \eqref{minusbound} into \eqref{+-split} completes the proof.
\end{proof}

Next, this is used as a base case to give an analogous result with more primes.

\begin{Lemma} \label{thm:chang1}
Let $p_1,\dots,p_K$ be a prime numbers. Suppose $A$ is a finite set of rational numbers and let $u$ be a non-zero rational number. For a vector $\dd=(d_1,\dots,d_K)$, define
\[A_{\dd}=\{a\in A:v_{p_1}(a)=d_1, \dots, v_{p_K}(a)=d_K\}.\]
Then for any $w$, a set of non-negative weights on $A$, and for any integer $k \geq 2$,
\[\tilde E_{k,w}(A;u)^{1/k}\leq \left(2\binom{2k}{2} \right)^K\sum_{\dd\in \ZZ^K}\tilde E_{k,w}(A_{\dd};u)^{1/k}.\]
\end{Lemma}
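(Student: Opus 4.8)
The plan is to prove Lemma~\ref{thm:chang1} by induction on $K$, using Lemma~\ref{thm:basecase} as the base case ($K=1$) and as the engine for the inductive step. The key observation is that the valuation maps at distinct primes interact nicely: refining a partition of $A$ by one more prime does not disturb the structure used for the earlier primes, because multiplying through by $u^{-1}$ (or by $a_i^{-1}$) and comparing $p$-adic valuations is a prime-by-prime argument.

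First I would set up the induction. Suppose the bound holds for $K-1$ primes $p_1,\dots,p_{K-1}$; that is, for any finite $A\subset\QQ$, any non-zero $u$, any weights $w$ and any $k\geq 2$,
\[
\tilde E_{k,w}(A;u)^{1/k}\leq \left(2\binom{2k}{2}\right)^{K-1}\sum_{\dd'\in\ZZ^{K-1}}\tilde E_{k,w}(A_{\dd'};u)^{1/k},
\]
where $A_{\dd'}=\{a\in A:v_{p_1}(a)=d_1,\dots,v_{p_{K-1}}(a)=d_{K-1}\}$. Now fix the additional prime $p_K$. For each fixed $\dd'\in\ZZ^{K-1}$ apply Lemma~\ref{thm:basecase} with the prime $p=p_K$ to the set $A_{\dd'}$, obtaining
\[
\tilde E_{k,w}(A_{\dd'};u)^{1/k}\leq 2\binom{2k}{2}\sum_{d_K\in\ZZ}\tilde E_{k,w}((A_{\dd'})_{d_K};u)^{1/k}.
\]
The point is that $(A_{\dd'})_{d_K}$, the subset of $A_{\dd'}$ on which $v_{p_K}=d_K$, is exactly $A_{\dd}$ for $\dd=(d_1,\dots,d_{K-1},d_K)$. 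Summing over $\dd'$ and combining with the inductive hypothesis gives
\[
\tilde E_{k,w}(A;u)^{1/k}\leq \left(2\binom{2k}{2}\right)^{K-1}\cdot 2\binom{2k}{2}\sum_{\dd\in\ZZ^{K}}\tilde E_{k,w}(A_{\dd};u)^{1/k}=\left(2\binom{2k}{2}\right)^{K}\sum_{\dd\in\ZZ^{K}}\tilde E_{k,w}(A_{\dd};u)^{1/k},
\]
as desired.

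The only subtlety to check — and the one place where one must be slightly careful — is that Lemma~\ref{thm:basecase} is being applied to the \emph{sub}set $A_{\dd'}$ rather than to $A$ itself, with the \emph{same} shift $u$ and the restriction of the weights $w$. This is legitimate because Lemma~\ref{thm:basecase} is stated for an arbitrary finite set of rationals and an arbitrary non-zero rational $u$, with no hypothesis linking $A$ and $u$; and the definition of $\tilde E_{k,w}$ only involves the values $w_a$ for $a$ in the set under consideration. I do not expect a genuine obstacle here: the argument is a clean iteration, and all the real content (the valuation/Newton-polygon argument showing a unique minimal valuation is impossible) is already packaged inside Lemma~\ref{thm:basecase}. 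One should only make sure the statement of Lemma~\ref{thm:basecase} is quoted as applying to subsets and to the restricted weights, which it does verbatim.
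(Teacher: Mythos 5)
Your proof is correct and takes essentially the same route as the paper: induction on the number of primes $K$ with Lemma~\ref{thm:basecase} providing both the base case and the inductive engine. The only cosmetic difference is the order in the inductive step — the paper first applies Lemma~\ref{thm:basecase} to peel off $p_K$ and then invokes the inductive hypothesis on each resulting piece, whereas you apply the inductive hypothesis first and then Lemma~\ref{thm:basecase} to each $A_{\dd'}$; both orders yield the same bound.
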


\begin{proof}

The aim is to prove that
\begin{multline}
\lim_{T\to \infty}\left (\frac{1}{T^2}\int_0^T \int_0^T\left|\sum_{\dd\in \mathbb Z^K} \sum_{a \in A_{\dd}}w_a a^{it_1}(a+u)^{it_2} \right|^{2k}dt_1 dt_2 \right)^{1/k}
\\ \leq \left(2\binom{2k}{2} \right)^K\sum_{\dd \in \mathbb Z^K}\lim_{T\to \infty}\lr{\frac{1}{T^2}\int_0^T \int_0^T\left| \sum_{a \in A_{\dd}} w_a a^{it_1}(a+u)^{it_2}    \right|^{2k}dt_1 dt_2}^{1/k}.
\end{multline}

We proceed by induction on $K$, the base case $K=1$ being given by Lemma \ref{thm:basecase}. Then
\begin{align*}
&\lim_{T\to \infty} \left(\frac{1}{T^2} \int_0^T \int_0^T \left|\sum_{\dd \in \mathbb Z^K }\sum_{a \in A_{\dd}}w_a a^{it_1}(a+u)^{it_2} \right|^{2k} dt_1dt_2\right)^{1/k}
\\&=\lim_{T\to \infty} \left(\frac{1}{T^2} \int_0^T \int_0^T  \left|\sum_{d_K\in \mathbb Z} \left(\sum_{\dd'\in \mathbb Z^{K-1} }\sum_{a \in A_{(\dd',d)}}w_a a^{it_1}(a+u)^{it_2} \right) \right|^{2k} dt_1 dt_2\right)^{1/k}
\\&\leq 2\binom{2k}{2}  \sum_{d_K\in \mathbb Z} \lim_{T\to \infty} \left(\frac{1}{T^2} \int_0^T \int_0^T \left|\sum_{\dd'\in \mathbb Z^{K-1} }\sum_{a \in A_{(\dd',d)}}w_a a^{it_1}(a+u)^{it_2}  \right|^{2k} dt_1 dt_2\right)^{1/k}
\\& \leq 2\binom{2k}{2}  \sum_{d_K\in \mathbb Z} \left( 2\binom{2k}{2}  \right)^{K-1} \sum_{\dd'\in \mathbb Z^{K-1} } \lim_{T\to \infty} \left(\frac{1}{T^2} \int_0^T \int_0^T \left| \sum_{a \in A_{(\dd',d)}}w_a a^{it_1}(a+u)^{it_2}  \right|^{2k} dt_1 dt_2\right)^{1/k}
\\&=\left( 2\binom{2k}{2}  \right)^{K} \sum_{\dd \in \mathbb Z^K}\lim_{T\to \infty}\lr{\frac{1}{T^2}\int_0^T \int_0^T\left| \sum_{a \in A_{\dd}}w_a a^{it_1}(a+u)^{it_2}  \right|^{2k}dt_1 dt_2}^{1/k}.
\end{align*}
The first inequality above follows from an application of Lemma \ref{thm:basecase}. The second inequality follows from the induction hypothesis.
\end{proof}

\section{Lambda-constants and query complexity} \label{sec:lambda}

\subsection{Lambda constants}

In order to extract as much as possible from the Theorem \ref{thm:chang1}, it will be convenient to use the language of \textit{$\Lambda$-constants}. The main motivation behind  $\Lambda$-constants is the stability property given by the forthcoming Corollary~\ref{corr:stability}, which is absent in the non-weighted version of the energy. 

We also encourage the interested reader to consult our preceding paper \cite{HRNZ} for a slightly more gentle introduction to $\Lambda$-constants in the setting of Dirichlet polynomials and more in-depth motivation behind this concept. 


Let $A \subset \mathbb Q$ be a finite set and let $u$ be a non-zero rational. Define
\[\Lambda_k(A;u):= \max \tilde E_{k,w}(A;u)^{1/k},\]
where the maximum is taken over all weights $w$ on $A$ such that
\begin{equation}
\sum_{a \in A} w(a)^2 =1. 
\label{eq:weightsnorm}
\end{equation}
An equivalent definition is
\[\Lambda_k(A;u):= \max \lim_{T\to \infty}
\left \|\sum_{a\in A}w_aa^{it_1}(a+u)^{it_2} \right \|_{2k}^2.\]
where the maximum is taken over the same range of weights.

\begin{Lemma} \label{lm:anyweights}
 Let $A \subset \mathbb Q$ be a finite set with some non-negative real weights $w_a$ assigned to each element $a \in A$ and let $u$ be a non-zero rational. Then
 \begin{equation} \label{eq:lambdaanyweights}
 \left \| \sum_{a \in A} w_a a^{it_1}(a+u)^{it_2} \right\|^2_{2k} \leq \Lambda_k(A;u) \left(\sum_{a \in A} w_a^2 \right) + o_{T\to\infty}(1).
 \end{equation}
\end{Lemma}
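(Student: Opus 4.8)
The statement is a homogeneity reduction to the normalized case built into the definition of $\Lambda_k$. First I would dispose of the trivial case: if $w_a = 0$ for all $a \in A$, both sides are $0$ and there is nothing to prove. Otherwise set $W := \big(\sum_{a \in A} w_a^2\big)^{1/2} > 0$ and $v_a := w_a / W$, so that $\sum_{a \in A} v_a^2 = 1$ and $v$ is an admissible weight in the definition of $\Lambda_k(A;u)$.

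Next I would apply Lemma \ref{lem:direnergy} to the weights $v$: it gives
\[
\lim_{T \to \infty} \left\| \sum_{a \in A} v_a a^{it_1}(a+u)^{it_2} \right\|_{2k}^{2k} = \tilde E_{k,v}(A;u).
\]
Since $x \mapsto x^{1/k}$ is continuous on $[0,\infty)$ and the $2k$-th powers above are non-negative, raising to the power $1/k$ yields
\[
\left\| \sum_{a \in A} v_a a^{it_1}(a+u)^{it_2} \right\|_{2k}^{2} = \tilde E_{k,v}(A;u)^{1/k} + o_{T \to \infty}(1),
\]
and by the definition of $\Lambda_k(A;u)$ as the maximum of $\tilde E_{k,w}(A;u)^{1/k}$ over weights of $\ell^2$-norm one, the main term $\tilde E_{k,v}(A;u)^{1/k}$ is at most $\Lambda_k(A;u)$.

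Finally I would use that $f \mapsto \|f\|_{2k}$ is positively homogeneous together with $\sum_{a \in A} w_a a^{it_1}(a+u)^{it_2} = W \sum_{a \in A} v_a a^{it_1}(a+u)^{it_2}$, so the left-hand side of \eqref{eq:lambdaanyweights} equals
\[
W^2 \left\| \sum_{a \in A} v_a a^{it_1}(a+u)^{it_2} \right\|_{2k}^{2} \leq W^2 \Lambda_k(A;u) + W^2 \cdot o_{T \to \infty}(1);
\]
since $W^2 = \sum_{a \in A} w_a^2$ is a constant independent of $T$, this is exactly $\Lambda_k(A;u)\big(\sum_{a \in A} w_a^2\big) + o_{T\to\infty}(1)$. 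I do not expect any genuine obstacle here: the argument is bookkeeping, resting on the $2k$-homogeneity of $\tilde E_{k,w}$ in the weights (each summand is a product of exactly $2k$ of them) and on the continuity of $x \mapsto x^{1/k}$ to push the error term through the $1/k$-th power; one could equivalently phrase it via the identity $\tilde E_{k,w}(A;u) = W^{2k}\,\tilde E_{k,v}(A;u)$.
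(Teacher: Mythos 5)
Your argument is correct and is essentially the paper's proof: the paper likewise disposes of the case $\sum_a w_a^2 = 0$, normalizes the weights to $w_a' = w_a/(\sum_a w_a^2)^{1/2}$, and then invokes the definition of $\Lambda_k(A;u)$, with the rescaling handled by the homogeneity you spell out. Your version merely makes the routine steps (Lemma \ref{lem:direnergy}, continuity of $x\mapsto x^{1/k}$, homogeneity of the norm) explicit where the paper calls them a straightforward consequence of the definition.
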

\begin{proof}
If $\sum_{a \in A} w^2_a = 0$ the claim of the lemma is trivial. Otherwise, define new weights
\[
w'_a := \frac{w_a}{(\sum_{a \in A} w_a^2)^{1/2}}
\]
which satisfy (\ref{eq:weightsnorm}). It thus suffices to show that
\[
\left \| \sum_{a \in A} w'_a a^{it_1}(a+u)^{it_2} \right\|^2_{2k} \leq \Lambda_k(A;u) + o_{T\to\infty}(1),
\]
which is a straightforward consequence of our definition of $\Lambda_k(A;u)$.
\end{proof}

We will use the following stability property of $\Lambda$-constants which helps us to work with subsets.

\begin{Corollary} \label{corr:stability}
Suppose that $A \subset \mathbb Q$, that $u$ is a non-zero rational and $A' \subset A$. Then
\[
\Lambda_k(A'; u) \leq \Lambda_k(A; u).
\]
In particular,
\[
\tilde E^{1/k}_k(A';u) \leq \Lambda_k(A;u)|A'|.
\]
and
\[
\tilde E_k(A;u) \leq \Lambda^k_k(A;u)|A|^k.
\]
\end{Corollary}
\begin{proof}
 The first claim follows from the observation that any set of weights $\{w_a \}_{a \in A'}$ with $\sum w^2_a = 1$ can be trivially extended to a set of weights $\{ w_a \}_{a \in A}$ by assigning zero weight to the elements in $A \setminus A'$. Next observe that $E_k$ is just $E_{k, w}$ with all the weights being one and apply Lemma~\ref{lm:anyweights}.
\end{proof}

\subsection{Query complexity}

The ideas of Section \ref{sec:energy} dovetail perfectly with the notion of the \textit{query-complexity} of a set of rationals. Given a set $A \subset \mathbb Q$, we define its query complexity $q(A)$ to be the smallest integer $t$ such that there are functions $f_i: \mathbb{Z} \to \mathbb{P}, i = 1, \ldots, t-1$ and a fixed prime $p_0$ such that the vectors
\[
(v_{p_0}(a), v_{p_1}(a),\dots,v_{p_{t-1}}(a)),\,\,\,\,\, a \in A
\]
are pairwise distinct, with the primes $p_i$ defined recursively as 
\begin{equation} \label{eq:evaluation}
p_{i} = f_i(v_{p_{i-1}}(a)).
\end{equation}

In the language of computational complexity, suppose that Alice and Bob agree on a set $A \subset \mathbb Q$, and then Alice secretly chooses an element $a \in A$. Bob can recover the value $a \in A$ by querying Alice iteratively at most $t$ times, at step $i$ evaluating $p_i$ using (\ref{eq:evaluation}) and asking Alice for $v_{p_i}(a)$.

The following result was recently proven by P\"{a}vl\"{o}lgyi and Zhelezov \cite{PZ}, building on work of Matolsci, Ruzsa, Shakan and Zhelezov \cite{MRSZ}.\footnote{We state a version of the result which is geared towards the particular considerations of our problem; see \cite[Theorem 1.1]{PZ} for a more general statement.}

\begin{Theorem} \label{thm:PFR} For any $\epsilon>0$, and any set $A \subset \mathbb Q$ with $|AA|\leq K|A|$, there exists a subset $A' \subset A$ with $|A'| \geq K^{-\frac{2}{\epsilon}}|A|$ and $q(A) \leq \epsilon \log_2|A|$.

\end{Theorem}

The next lemma records that any set with small query complexity also has a small $\Lambda$-constant.
\begin{Lemma} \label{lem:lambdasep}
Let $A \subset \QQ$ with $q(A) \leq t$. Then for any $u \in \QQ \setminus \{0\}$
\[ \Lambda_k(A;u) \leq \left(2\binom{2k}{2} \right)^{t} .\]
\end{Lemma}

\begin{proof} Write $t=q(A)$. Let $w$ be any set of weights on $A$ that satisfy \eqref{eq:weightsnorm}. Let $a \in A$ be arbitrary. In the notation of Lemma \ref{thm:chang1}, we have a list of primes $p_1, p_2, \dots,p_t$ defined by (\ref{eq:evaluation}) such that the set
\[
A_{\dd}=\{a' \in A :v_{p_1}(a')=v_{p_1}(a),\dots,v_{p_t}(a')=v_{p_t}(a)\}
\]
has cardinality exactly $1$. For any singleton $\{a\} \in A$, $\tilde E_{k,w}(\{a\};u)=w_a^{2k}$. Therefore, by Lemma \ref{thm:chang1},
\[\tilde E_{k,w}(A';u)^{1/k}\leq \left(2\binom{2k}{2} \right)^{t}\sum_{a \in A'}w_a^2=\left(2\binom{2k}{2} \right)^{t}.\]
\end{proof}

The following result is important generalisation of the previous one; it shows that if $A$ contains a large subset with small query complexity then $A$ itself has small $\Lambda$-constant.

\begin{Lemma} \label{lem:subset}
Let $A \subset \mathbb Q^*$ be a finite set with $|AA| \leq K|A|$ and let $u$ be a non-zero rational number. Suppose that $A' \subset A$ and $q(A')=t$. Then
\[ \Lambda_k(A;u) \leq K^4 \left(\frac{|A|}{|A'|}\right)^2 \left(2\binom{2k}{2} \right)^{t}. \]
\end{Lemma}

\begin{proof} 

Let $w$ be an arbitrary set of weights on $A$ such that $\sum_{a \in A} w(a)^2 =1$. We seek a suitable upper bound for
\[ \left\|  \sum_{a\in A}w_aa^{it_1}(a+u)^{it_2} \right \|_{2k}^2 .\]
For a fixed $z \in A/A'$, define a set of weights $w^{(z)}$ on $zA'$ by taking $w^{(z)}(za')=w(za')$ if $za' \in A$ and $w^{(z)}(za')=0$ otherwise. Define 
\[R_{(A/A'),A'}(x):=|\{(s,a) \in (A/A') \times A' : sa=x \}| \]
and note that $R_{(A/A'),A'}(x) \geq |A'|$ for all $x \in A$. This is because, for all $a' \in A'$, $x=(\frac{x}{a'})a'$. Therefore,
\begin{align*} \left\| \sum_{z \in A/A'} \sum_{a' \in A'} w^{(z)}(za')(za')^{it_1}(za'+u)^{it_2} \right \|_{2k}
&= \left\| \sum_{a \in A}R_{(A/A'),A'} (a) w(a)a^{it_1}(a+u)^{it_2} \right \|_{2k}
\\& \geq |A'|\left\|  \sum_{a\in A}w_aa^{it_1}(a+u)^{it_2} \right \|_{2k}.
\end{align*}
On the other hand, by the triangle inequality and Lemma \ref{lm:anyweights}
\begin{align*}
\left\| \sum_{z \in A/A'} \sum_{a' \in A'} w^{(z)}(za')(za')^{it_1}(za'+u)^{it_2} \right \|_{2k}
& \leq \sum_{z \in A/A'} \left\|  \sum_{a' \in A'} w^{(z)}(za')(za')^{it_1}(za'+u)^{it_2} \right \|_{2k}
\\& \leq  \sum_{z \in A/A'} \Lambda_k(zA'; u)^{1/2} + o_{T \rightarrow \infty}(1).
\end{align*}
Since $q(A')=t$, it follows from Lemma \ref{lem:lambdasep} that $\Lambda_k(zA';u) = \Lambda_k(A';u/z) \leq \left(2\binom{2k}{2} \right)^{t} $. We also have
\[|A/A'| \leq |A/A| \leq \frac{|AA|^2}{|A|} \leq K^2 |A|, \]
by the Ruzsa Triangle Inequality (see \cite{TV}). It therefore follows that
\[ \left\|  \sum_{a\in A}w_aa^{it_1}(a+u)^{it_2} \right \|_{2k} \leq K^2 \left( \frac{|A|}{|A'|} \right ) \left(2\binom{2k}{2} \right)^{t/2} + o_{T \rightarrow \infty} (1),
\]
and the result follows.
\end{proof}

Combining this with Theorem \ref{thm:PFR} gives the following, which is our main result concerning $\Lambda$-constants.

\begin{Theorem} \label{thm:lambda}
Given $0<\gamma < 1/2$, there exists a positive constants $C=C(\gamma, k)$ such that for any finite $A \subset \mathbb Q^*$ with $|AA|=K|A|$ and any non-zero rational $u$,
\[ \Lambda_k(A;u) \leq K^C|A|^{\gamma} .\]
\end{Theorem}

\begin{proof}

Apply Theorem \ref{thm:PFR} with $\epsilon=\frac{\gamma}{\log_2(4k)}$. There exists $A' \subset A$ with $|A'| \geq K^{-\frac{2}{\epsilon}}|A|$ and $q(A) \leq \epsilon \log_2|A|$. Then by Lemma \ref{lem:subset}
\begin{align*}
 \Lambda_k(A;u) &\leq K^4 \left(\frac{|A|}{|A'|}\right)^2 \left(2\binom{2k}{2} \right)^{\epsilon \log_2|A|}
 \\& \leq K^{4+\frac{4}{\epsilon}}|A|^{\epsilon \log_2(4k)}.
 \end{align*}

\end{proof}

Observe that we can in fact take $C(\gamma,k)$ in Theorem \ref{thm:lambda} to be $4 +\frac{4\log_2(4k)}{\gamma}$.

\section{Concluding the proofs} \label{sec:conclusion}

In this section we conclude the proof of Theorem \ref{thm:mainmain}, which is the main theorem of this paper, and Theorem~\ref{thm:BS_almost_subgroups} announced in the introduction.

We will use the Pl\"{u}nnecke-Ruzsa Theorem. See \cite{P} for a simple inductive proof. Following convention, we state it using additive notation, although it will be used in the multiplicative setting.

\begin{Theorem} \label{thm:Plun} Let $A$ be a subset of a commutative additive group $G$ with $|A+A| \leq K|A|$. Then for any $h \in \mathbb N$,
\[ |hA| \leq K^h|A|. \]
\end{Theorem}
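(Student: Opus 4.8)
The plan is to follow Petridis's slick inductive argument. Its heart is the following lemma: \emph{if $B'\subseteq A$ is a nonempty set minimising the ratio $K':=|A+B'|/|B'|$ among all nonempty subsets of $A$, then $|B'+A+C|\le K'|B'+C|$ for every finite set $C\subseteq G$.} Since $A$ itself is a legitimate competitor in the minimisation, we automatically have $K'\le |A+A|/|A|\le K$.

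I would prove the lemma by induction on $|C|$, the case $|C|=1$ being an equality. For the inductive step write $C=C_0\cup\{\gamma\}$ with $\gamma\notin C_0$, and set $D:=\{b\in B':\ b+A+\gamma\subseteq B'+A+C_0\}$. The two sets $B'+A+C_0$ and $B'+A+\gamma$, whose union is $B'+A+C$, both contain $D+A+\gamma$; hence, since $|B'+A+\gamma|=|B'+A|$ and $|D+A+\gamma|=|D+A|$,
\[ |B'+A+C|\le |B'+A+C_0|+|B'+A|-|D+A|. \]
Here $|B'+A+C_0|\le K'|B'+C_0|$ by the inductive hypothesis, $|B'+A|=K'|B'|$ by the choice of $B'$, and $|D+A|\ge K'|D|$ by minimality of the ratio (trivially so if $D=\emptyset$). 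Hence $|B'+A+C|\le K'\bigl(|B'+C_0|+|B'|-|D|\bigr)$. Since $|B'+C|=|B'+C_0|+\bigl|(B'+\gamma)\setminus(B'+C_0)\bigr|$, it remains only to check that $|B'|-|D|\le \bigl|(B'+\gamma)\setminus(B'+C_0)\bigr|$; this holds because the translation $b\mapsto b+\gamma$ injects $B'\setminus D$ into $(B'+\gamma)\setminus(B'+C_0)$. Indeed, if $b\in B'$ satisfies $b+\gamma=b''+c_0\in B'+C_0$, then $b+A+\gamma=A+b''+c_0\subseteq B'+A+C_0$, so $b\in D$.

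To conclude, I would apply the lemma repeatedly with $C=(h-1)A$ to get $|B'+hA|\le K'|B'+(h-1)A|\le\cdots\le (K')^{h}|B'|$ for every $h\ge 1$ (the case $h=1$ being the definition of $K'$). Finally, fixing any $b'\in B'$ and using $b'+hA\subseteq B'+hA$,
\[ |hA|=|b'+hA|\le |B'+hA|\le (K')^{h}|B'|\le K^{h}|A|, \]
where the last inequality uses $K'\le K$ and $|B'|\le|A|$.

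The main obstacle is pinning down the auxiliary set $D$ so that two requirements hold simultaneously: the overlap $D+A+\gamma$ must be large enough for the minimality bound $|D+A|\ge K'|D|$ to be useful, while $B'\setminus D$ must control the genuinely new part $(B'+\gamma)\setminus(B'+C_0)$ of $B'+C$. Everything else is routine counting; note that passing to the \emph{minimiser} $B'$, rather than working with $A$ directly, is precisely what makes the final iteration cleanly multiplicative.
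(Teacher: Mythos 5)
Your proposal is correct: the paper offers no proof of its own, deferring to Petridis \cite{P}, and your argument is precisely that inductive Petridis proof (minimising the ratio $|A+B'|/|B'|$, the key lemma $|B'+A+C|\le K'|B'+C|$ by induction on $|C|$, then iterating with $C=(h-1)A$), carried out accurately, including the correct handling of the case $D=\emptyset$ and the injection $B'\setminus D\to (B'+\gamma)\setminus(B'+C_0)$. Nothing to add.
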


For the convenience of the reader, we restate Theorem \ref{thm:mainmain}.

\begin{Theorem} \label{thm:mainmainagain}
For all $b \in \mathbb Z$, there exists $k=k(b)$ such that for any finite set $A \subset \mathbb Q^*$ and any non-zero rational $u$,
\[  \max \{ |A^{(k)}|, |(A+u)^{(k)}| \} \geq |A|^b  \]
\end{Theorem}

\begin{proof} Fix $b$ and assume that
\[|A^{(k)}| < |A|^b \]
for some sufficiently large $k=2^l$. The value of $l$ (and thus also that of $k$) will be specified at the end of the proof. 
Since $|A^{(2^l)}| < |A|^b$, it follows that
\[ \frac{|A^{(2^l)}|}{|A^{(2^{l-1})}|} \frac{|A^{(2^{l-1})}|}{|A^{(2^{l-2})}|} \cdots \frac{|A^{(2)}|}{|A|} < |A|^{b-1}\]
and thus there is some integer $l_0 \leq l$ such that
\[ \frac{|A^{(2^{l_0+1})}|}{|A^{(2^{l_0})}|} < |A|^{\frac{b-1}{l}} .\]
Therefore, writing $k_0=2^{l_0}$ and $B=A^{(k_0)}$, we have
\[|BB| < |B| |A|^{\frac{b-1}{l}}. \]
Also, for any non-zero $\lambda \in \mathbb Q$, $|(\lambda B)(\lambda B)| <|B||A|^{\frac{b-1}{l}}$. Therefore, by Theorem \ref{thm:lambda},
\[ \Lambda_h(\lambda B;u) \leq |A|^{C\frac{b-1}{l}} |B|^{\gamma} \leq |A|^{C\frac{b-1}{l}+ \gamma b}  \]
where $C=C(h, \gamma)$ and $h, \gamma$ will be specified later.

Now, for some $\lambda \in \mathbb Q$, we have $A \subset \lambda B$, and thus by Corollary \ref{corr:stability} and Lemma \ref{lem:CSbasic}
\[ \frac{|A|^2}{\max \{|A^{(h)}|,|(A+u)^{(h)}| \}^{2/h}} \leq \tilde E_h^{1/h}(A;u) \leq |A| \Lambda_h(\lambda B;u) \leq |A|^{1+C\frac{b-1}{l}+ \gamma b} .\]
This rearranges to
\[ \max \{|A^{(h)}|,|(A+u)^{(h)}| \} \geq |A|^{\frac{h}{2}(1-C\frac{b-1}{l}- \gamma b)} .\]
Choose $\gamma=1/100b$ and $h=4b$. Then $C=C(h,\gamma)=C(b)$ and we have
\[ \max \{|A^{(h)}|,|(A+u)^{(h)}| \} \geq |A|^{\frac{h}{2}(99/100-C(b)\frac{b-1}{l})} .\]
Then choose $l=(b-1)4C$ to get
\[ \max \{|A^{(h)}|,|(A+u)^{(h)}| \} \geq |A|^{\frac{h}{4}}=|A|^b .\]
Note that the choice of $l$ depends only on $b$ and thus $k=2^{4C(b-1)}=k(b)$.
In particular, since $k > h$, we conclude that
\[ \max \{|A^{(k)}|,|(A+u)^{(k)}| \} \geq |A|^b ,\]
as required. 

\end{proof}

If we use the value of $C(\gamma,k)$ indicated at the end of the proof of Theorem \ref{thm:lambda} to keep track of the constants in this argument, it follows that we can take $k=2^{O(b^2\log b)}$. To be even more precise, it gives
\[
k=(16b)^{1616b^2}.
\]
This compares favourably with the dependency in the corresponding sum-product bound of Bourgain and Chang \cite{BC}, where they commented that it was possible to take $k=2^{O(b^4)}$. A similar quantitative improvement for the classical iterated sum-product problem is possible by studying the recent paper of P\"{a}vl\"{o}lgyi and Zhelezov \cite{PZ} and filling in some extra details.

Theorem \ref{thm:lambda} also implies Theorem \ref{thm:main1}. The statement is repeated below for the convenience of the reader.

\begin{Theorem}
Given $0<\gamma < 1/2$ and any integer $k \geq 2$, there exists a positive constant $C=C(\gamma, k)$ such that for any finite $A \subset \mathbb Q^*$ with $|AA|=K|A|$ and any non-zero rational $u$,
\[ | (A+u) ^{(k)}|  \geq \frac{|A|^{k(1-\gamma)-1}}{K^{Ck}}.\]
\end{Theorem}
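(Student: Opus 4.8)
The plan is to deduce Theorem \ref{thm:main1} directly from Theorem \ref{thm:lambda} together with Lemma \ref{lem:CSbasic} and the Pl\"unnecke--Ruzsa inequality, exactly as in the proof of Theorem \ref{thm:chang3} but now using the strong $\Lambda$-bound instead of the weak Chang-type bound. First I would fix $0<\gamma<1/2$ and an integer $k\geq 2$, and set $A\subset\mathbb Q$ with $|AA|=K|A|$ and $u\in\mathbb Q\setminus\{0\}$. By Theorem \ref{thm:lambda} applied with parameter $\gamma' := \gamma$ (or a suitably rescaled $\gamma'$), there is a constant $C'=C'(\gamma,k)$ with $\Lambda_k(A;u)\leq K^{C'}|A|^{\gamma}$. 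Then by Corollary \ref{corr:stability} (taking $A'=A$), we obtain
\[
\tilde E_k(A;u)\leq \Lambda_k^k(A;u)|A|^k \leq K^{C'k}|A|^{k(1+\gamma)}.
\]

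Next I would bound the product set on the ``cheap'' side: by the Pl\"unnecke--Ruzsa Theorem \ref{thm:Plun} in the multiplicative setting, $|A^{(k)}|\leq K^{k}|A|$. Now insert both estimates into Lemma \ref{lem:CSbasic}, which gives
\[
|A|^{2k}\leq |A^{(k)}|\,|(A+u)^{(k)}|\,\tilde E_k(A;u)\leq K^{k}|A|\cdot |(A+u)^{(k)}|\cdot K^{C'k}|A|^{k(1+\gamma)}.
\]
Rearranging,
\[
|(A+u)^{(k)}|\geq \frac{|A|^{2k-1-k(1+\gamma)}}{K^{k+C'k}}=\frac{|A|^{k(1-\gamma)-1}}{K^{(1+C')k}},
\]
so the claim follows with $C:=1+C'(\gamma,k)$. (If one prefers the cleaner exponent $K^{Ck}$ one simply absorbs the extra $K^k$ into the constant.)

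There is essentially no obstacle here: the entire content has been front-loaded into Theorem \ref{thm:lambda}, whose proof in turn rests on Theorem \ref{thm:goodsubset} (the genuinely hard, long argument) and the routine amplification Lemma \ref{lem:subset}. The only minor point to be careful about is the interplay of the $\gamma$ in the statement with the $\gamma$ fed into Theorem \ref{thm:lambda}: one should either invoke Theorem \ref{thm:lambda} with the same $\gamma$, or, if a slack factor is desired, apply it with $\gamma/2$ and note $K^{C}|A|^{\gamma/2}\le K^{C}|A|^{\gamma}$, at the cost of enlarging $C$. One should also note, as in the proof of Theorem \ref{thm:mainmainagain} above, that Theorem \ref{thm:lambda} is dilation-invariant (since $\Lambda_k(\lambda A;u)=\Lambda_k(A;u/\lambda)$ and $|(\lambda A)(\lambda A)|=|AA|$), though for this particular statement no dilation is needed. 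Thus the proof is a short corollary-style argument.
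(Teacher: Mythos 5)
Your proposal is correct and follows essentially the same route as the paper: the paper's proof likewise applies Theorem \ref{thm:lambda} to bound the (uniformly weighted) energy $\tilde E_k(A;u)\le K^{Ck}|A|^{k(1+\gamma)}$, invokes Lemma \ref{lem:CSbasic} and the Pl\"unnecke--Ruzsa bound $|A^{(k)}|\le K^k|A|$, and absorbs the resulting extra factor $K^{k}$ into the constant, exactly as you note.
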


\begin{proof} Define $w(a)= 1 / |A|^{1/2}$ for all $a \in A$ and note that \eqref{eq:weightsnorm} is satisfied. Furthermore, for this set of weights $w$,
\begin{equation}
\tilde E_{k,w}(A;u) = \frac{\tilde E_k(A;u)}{|A|^{k}} \geq \frac{|A|^{k}}{|A^{(k)}||(A+u)^{(k)}|},
\label{eq:trivialweights}
\end{equation}
where the inequality comes from Lemma \ref{lem:CSbasic}. It follows from Theorem \ref{thm:lambda} that there exists a constant $C=C(\gamma,k)$ such that for any $u \in \mathbb Q \setminus \{0\}$, $\Lambda_{k}(A;u) \leq K^C|A|^{\gamma}$. Consequently, by the definition of $\Lambda_{k}(A;u)$,
\[ \tilde E_{k,w}(A;u) \leq K^{Ck}|A|^{\gamma k}. \]
Combining this with \eqref{eq:trivialweights}, it follows that
\begin{equation}
|A^{(k)}||(A+u)^{(k)}| \geq \frac{|A|^{k(1-\gamma)}}{K^{Ck}}.
\label{eq:energybound}
\end{equation}
Finally, since $|AA| \leq K|A|$, it follows from the Pl\"{u}nnecke-Ruzsa Theorem that $|A^{(k)}| \leq K^k|A|$. Inserting this into \eqref{eq:energybound} completes the proof.

\end{proof}

We now turn to the proof of Theorem~\ref{thm:BS_almost_subgroups}. Recall its statement.

\begin{Theorem} For any $\gamma > 0$ there is $C(\gamma) > 0$ such that for any $K$-almost subgroup $A \subset \mathbb{Q}^*$ and fixed non-zero $c_1, c_2 \in \mathbb{Q}$ the number $A(2, K)$ of solutions $(x_1, x_2) \in A^2$ to 
 $$
 c_1x_1 + c_2 x_2 = 1
 $$
 is bounded by 
 $$
 A(2, K) \leq |A|^\gamma K^C.
 $$

\end{Theorem}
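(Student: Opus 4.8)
The plan is to reduce the count of solutions $(x_1,x_2) \in A^2$ of $c_1x_1 + c_2x_2 = 1$ to a bound for the additive energy of a dilate of $A$, and then to invoke Theorem~\ref{thm:lambda} (via Theorem~\ref{thm:main1} / inequality \eqref{eq:energybound} applied with a clever choice of $u$). First I would observe that if $(x_1,x_2)$ and $(x_1',x_2')$ are two solutions, then $c_1(x_1 - x_1') = -c_2(x_2-x_2')$, so the number of solutions is essentially controlled by the number of pairs $(x_1,x_1') \in A^2$ with $x_1 - x_1'$ taking a fixed value; that is, by $r_{A-A}(t)$ for the relevant shift $t$. So the core task is to show $r_{A-A}(t) \le |A|^\gamma K^C$ for every nonzero $t$, which is exactly the $l_\infty$ sum-set statement advertised in the introduction. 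Equivalently, writing $d = x_1 - x_1'$, we want: for each fixed nonzero $d$, the number of $a \in A$ with $a + d \in A$ is small.

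Next I would connect this to the product-set machinery. Fix a nonzero value $d$ and consider the set $A' = A \cap (A - d)$, so that $A' + d \subseteq A$; we want $|A'|$ small (say $\le |A|^\gamma K^C$), or rather we want to bound the number of representations, which amounts to the same thing up to the choice of $d$. The key trick is to feed $u = d$ (or a suitable rational built from $c_1,c_2,d$) into Theorem~\ref{thm:main1} or directly into Theorem~\ref{thm:lambda}: since both $A'$ and $A' + d$ sit inside sets with small product set ($|AA| \le K|A|$ gives, by Pl\"unnecke--Ruzsa, control on $|A'A'|$ in terms of $K$ and $|A|/|A'|$), we get from \eqref{eq:energybound} applied to $A'$ with shift $u=d$ that $|(A')^{(k)}| \cdot |(A'+d)^{(k)}| \ge |A'|^{k(1-\gamma)}/K'^{Ck}$. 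But both $(A')^{(k)}$ and $(A'+d)^{(k)}$ are themselves contained in $k$-fold product sets of sets of multiplicative doubling polynomial in $K$ and $|A|/|A'|$, hence have size at most roughly $(K|A|/|A'|)^{O(k)}|A'|$. Plugging these upper bounds into the energy lower bound and rearranging yields $|A'|^{k(1-\gamma) - 2} \le K^{O(k)}(|A|/|A'|)^{O(k)}$, which for $k$ large forces $|A'| \le |A|^\gamma K^C$ after absorbing constants and adjusting $\gamma$. Finally, summing over the at most $O((|A|/|A'|)^2 \cdot K^2)$-many relevant shifts $d$ (coming from $c_1 x_1 - c_1 x_1' = c_2 x_2' - c_2 x_2$ and the Ruzsa covering/triangle inequalities) and bounding $A(2,K) \le \sum_t r_{A-A}(t) \cdot \mathbf{1}[\ldots]$ appropriately, or more cleanly by noting $A(2,K) \le \max_t r_{c_1 A + c_2 A}(1)$-type quantities, gives the claimed bound $A(2,K) \le |A|^\gamma K^C$.

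I expect the main obstacle to be the bookkeeping in the reduction: one must be careful that the shift $u$ fed into Theorem~\ref{thm:lambda} is genuinely nonzero and rational (which it is, as a ratio of the fixed $c_i$ and the difference $d$, provided $d \ne 0$), and one must correctly track how the doubling constant degrades when passing from $A$ to the subset $A' = A \cap (A-d)$ — here the right tool is that $|A'A'| \le |AA| \cdot (|A|/|A'|)^{O(1)}$ via Pl\"unnecke--Ruzsa and Ruzsa's triangle inequality, so the effective $K$ is $K \cdot (|A|/|A'|)^{O(1)}$, and this feeds back into the exponent. The delicate point is that the final inequality must be self-improving: the larger $|A'|$ is, the stronger the contradiction, so one should set things up so that assuming $|A'| > |A|^\gamma K^C$ leads to a contradiction for $k = k(\gamma)$ chosen large enough, with $C = C(\gamma)$ emerging from the exponents. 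Degenerate cases ($d=0$, i.e. $x_1 = x_1'$, contributing the "trivial" solutions) must be separated out at the start and contribute at most $|A|$, which is absorbed into $|A|^\gamma K^C$ trivially since we may assume $|A|^\gamma K^C \ge |A|$... wait — that is not automatic, so instead the $d = 0$ contribution corresponds to at most one value of $x_2$ for each $x_1$ (from $c_2 x_2 = 1 - c_1 x_1$), hence at most $|A|$ solutions total with $x_1 = x_1'$ is not the relevant count; rather the genuine bound is that for the \emph{equation} itself each $x_1$ determines $x_2$ uniquely, so $A(2,K) \le |A|$ always, and the content is improving this to $|A|^\gamma K^C$, which is exactly what the product-set argument delivers once $|A|$ is large compared to $K^{C/(1-\gamma)}$; the remaining range $|A| \le K^{O(1)}$ is handled trivially by enlarging $C$.
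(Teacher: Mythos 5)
Your proposal has the right general shape (restrict attention to the solution set, feed a nonzero shift into the mixed-energy machinery, and control the $k$-fold product sets via Pl\"unnecke--Ruzsa), but it contains two genuine gaps. The first is the opening reduction: from two solutions one only gets $c_1(x_1-x_1')=-c_2(x_2-x_2')$, so the differences $x_1-x_1'$ are \emph{not} fixed; the solution set is $A\cap\phi(A)$ for the affine map $\phi(x)=(1-c_1x)/c_2$, which is a fixed-difference count $r_{A-A}(t)$ only in the special case $c_1=-c_2$. This is repairable (the paper simply takes $S=\{x_1\in A: \phi(x_1)\in A\}$ and the shift $u=-1/c_1$, noting that $S-1/c_1$ lies in a dilate of $A$, so both $|S^{(k)}|$ and $|(S-1/c_1)^{(k)}|$ are at most $|A^{(k)}|\leq K^k|A|$), but as written your reduction does not cover general $c_1,c_2$.

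The second gap is the decisive one: you apply Theorem \ref{thm:main1}/Theorem \ref{thm:lambda} to the subset $A'=A\cap(A-d)$ using $A'$'s \emph{own} multiplicative doubling, which is only bounded by $K|A|/|A'|$. The resulting loss $(K|A|/|A'|)^{C(\gamma)k}$ cannot be beaten by taking $k$ large, because $k$ multiplies the exponents on both sides: your inequality rearranges to $|A'|\leq K^{O(1)}|A|^{(C+2/k)/(1-\gamma+C)}$, and since $C=C(\gamma)\geq 1$ the exponent of $|A|$ tends to $C/(1+C-\gamma)$, which is close to $1$, not to $\gamma$; even the self-improving dichotomy you sketch only yields $|A'|\lesssim|A|^{1-c(\gamma)}$, far short of $|A|^{\gamma}K^{C}$. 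The missing ingredient is precisely the stability property of $\Lambda$-constants, Corollary \ref{corr:stability} (the paper's stated motivation for working with weighted energies): since $S\subseteq A$, one bounds $\tilde E_k(S;u)\leq\Lambda_k(A;u)^k|S|^k\leq K^{Ck}|A|^{\gamma' k}|S|^k$ using the \emph{ambient} set's doubling $K$, never the degraded doubling of the subset. Combining this with Lemma \ref{lem:CSbasic} and $\max\{|S^{(k)}|,|(S-1/c_1)^{(k)}|\}\leq K^k|A|$ gives $|S|\leq |A|^{\gamma'+2/k}K^{C+2}$, and choosing $\gamma'=\gamma/2$, $k=\lfloor 2/\gamma'\rfloor+1$ finishes; without invoking this stability step your argument does not prove the theorem.
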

\begin{proof}
	Let $S \subset A$ be the set of $x_1 \in A$ such that $c_1x_1 + c_2x_2 = 1$ for some $x_2 \in A$. Since the projection $(x_1, x_2) \to x_1$ is injective, it suffices to bound the size of $S$.
    
    Since $S \subset A$, by Theorem~\ref{thm:lambda} and
    Corollary~\ref{corr:stability} for any non-zero $u$ 
$$
\tilde E_k(S; u) \leq K^{k C(\gamma', k)}|A|^{k\gamma'} |S|^k
$$
with the parameters $0 < \gamma' < 1/2, k \geq 2$ to be taken in due course.

In particular, by Lemma~\ref{lem:CSbasic}
$$
|S|^k \leq \left(K^{k C(\gamma', k)}|A|^{k\gamma'} |S|^k \right)^{1/2} \max \{ |S^k|, |(S-1/c_1)^k| \}.
$$

On the other hand, $S \subseteq A$ and $(S - 1/c_1) \subseteq -(c_2/c_1)A$, so by the Pl\"{u}nnecke-Ruzsa inequality
$$
\max \{ |S^k|, |(S-1/c_1)^k| \} \leq |A^{(k)}| \leq K^k|A|.
$$

We then have
$$
|S| \leq |A|^{\gamma' + 2/k} K^{C + 2},
$$
and taking $k = \lfloor 2/\gamma' \rfloor + 1$ and $\gamma'= \gamma/2$, the claim follows.

\end{proof}

\section{Further Applications} \label{sec:applications}

\begin{proof}[Proof of Theorem \ref{thm:STinverse}] Recall that Theorem \ref{thm:STinverse} is the following statement. For all $\gamma \geq 0$ there exists a constant $C=C(\gamma)$ such that for any finite $A\subset \mathbb Q$ with $|AA| \leq K|A|$ and any finite set $L$ of lines in the plane, $I(P,L)\leq 3 |P| + |A|^{\gamma}K^C|L|$, where $P= A \times A$.

First of all, observe that horizontal and vertical lines contribute a total of at most $2|P|$. This is because each point $p \in P$ can belong to at most one horizontal and one vertical line. Similarly, lines through the origin contribute at most $|P| +|L|$ incidences, since each point aside from the origin belongs to at most one such line, and the origin itself may contribute $|L|$ incidences.

It remains to bound incidences with lines of the form $y=mx+c$, with $m,c \neq 0$. Let $l_{m,c}$ denote the line with equation $y=mx+c$. Note that, if $m \notin \mathbb Q$ then $l_{m,c}$ contains at most one point from $P$. Indeed, suppose $l_{m,c}$ contains two distinct points $(x,y)$ and $(x',y')$ from $P$. In particular, since $A \subset \mathbb Q$, $x,y,x',y' \in \mathbb Q$. Then $l_{m,c}$ has direction $m= \frac{y-y'}{x-x'}$. Therefore, lines $l_{m,c}$ with irrational slope $m$ contribute at most $|L|$ incidences.

Next, suppose that $m \in \mathbb Q$ and $c \notin \mathbb Q$. Then $l_{m,c}$ does not contain any points from $P$, since if it did then we would have a solution to $y=mx+c$, but the left hand side is rational and the right hand side is irrational.

It remains to consider the case when $m,c \in \mathbb Q^*$. An application of Theorem \ref{thm:BS_almost_subgroups} implies that $|l_{m,c} \cap P| \leq K^C|A|^{\gamma}$. Therefore, these lines contribute a total of at most $|L|K^C|A|^{\gamma}$ incidences.

Adding together the contributions from these different types of lines completes the proof.

\end{proof}

\begin{proof}[Proof of Theorem \ref{thm:additivebasis}]
Recall that Theorem \ref{thm:additivebasis} states that, for any $\gamma > 0$ there exists $C(\gamma)$ such that
for an arbitrary $A \subset \mathbb{Q}$ with $|AA| = K|A|$ and $B, B' \subset \mathbb{Q}$,
$$
S := \left|\{(b, b') \in B \times B' : b + b' \in A\} \right| \leq 2|A|^\gamma K^C \min \{|B|^{1/2}|B'| + |B|, |B'|^{1/2}|B| + |B'| \}. 
$$

We will prove that 
\begin{equation}
S \leq 2|A|^\gamma K^C(|B'|^{1/2}|B| + |B'|).
\label{eq:case1}
\end{equation}
Since the roles of $B$ and $B'$ are interchangeable, \eqref{eq:case1} also implies that $S \leq 2|A|^\gamma K^C(|B|^{1/2}|B'| + |B|)$, and thus completes the proof.

Let $\gamma > 0$ and $C(\gamma)$, given by Theorem~\ref{thm:BS_almost_subgroups}, be fixed. Without loss of generality assume that $S \geq 2|B'|$ as otherwise the claimed bound is trivial. 

For each $b \in B$ define 
$$
S_b := \{ b' \in B' : b + b' \in A\},
$$
and similarly for $b' \in B'$
$$
T_{b'} := \{b \in B: b' + b \in A \}.
$$
It follows from Theorem~\ref{thm:BS_almost_subgroups} that for $b_1,b_2 \in B$ with $b_1 \neq b_2$ 
$$
|S_{b_1} \cap S_{b_2}| \leq |A|^\gamma K^C
$$
since each $x \in S_{b_1} \cap S_{b_2}$ gives a solution $(a, a') := (b_1 + x, b_2 + x)$ to
$$
a - a' = b_1 - b_2
$$
with $a, a' \in A$.

On the other hand, by double-counting and the Cauchy-Schwarz inequality,
$$
\sum_{b \in B} |S_b| + \sum_{b_1,b_2 \in B : b_1 \neq b_2} |S_{b_1} \cap S_{b_2}| =
\sum_{b' \in B'} |T_{b'}|^2 \geq |B'|^{-1}(\sum_{b' \in B'} |T_{b'}|)^2 = |B'|^{-1}S^2.
$$
Therefore,
$$
\sum_{b_1,b_2 \in B : b_1 \neq b_2} |S_{b_1} \cap S_{b_2}| \geq |B'|^{-1}S^2 - \sum_{b \in B} |S_b| = |B'|^{-1}S^2 - S \geq \frac{1}{2} |B'|^{-1}S^2
$$
by our assumption.

The left-hand side is at most $ |B|^2|A|^\gamma K^C$, and so
$$
	S \leq (2|A|^\gamma K^C)^{1/2} |C|^{1/2}|B'|,
$$
which completes the proof.

\end{proof}

\begin{proof}[Proof of Theorem \ref{thm:proddiff}]
Recall that Theorem \ref{thm:proddiff} states that for all $b$ there exists $k$ such that for all $A,B \subset \mathbb Q$ with $|B| \geq 2$, $|(A+B)^{k}| \geq |A|^b$.

Since $|B| \geq 2$, there exist two distinct elements $b_1, b_2 \in B$. Apply Theorem \ref{thm:mainmain} to conclude that for all $b$ there exists $k=k(b)$ with
\[ |(A+B)^k| \geq  \max \{|(A+b_1)^k|, |((A+b_1)+(b_2-b_1))^{k} |\} \geq |A|^b.\]

\end{proof}

\section*{Acknowledgements}
Oliver Roche-Newton was partially supported by the Austrian Science Fund FWF Project P 30405-N32. Dmitrii Zhelezov was supported by the Knut and Alice Wallenberg Foundation Program for Mathematics 2017. 

We thank Brendan Murphy, Cosmin Pohoata, Imre Ruzsa and Endre Szemer\'edi for helpful conversations.


\bibliographystyle{plain}

\end{document}